\newtheorem{thm}{Theorem}[section]
\newtheorem{lemma}[thm]{Lemma}
\newtheorem{cor}[thm]{Corollary}
\newtheorem{claim}{Claim}
\theoremstyle{definition}
\newtheorem{dfn}[thm]{Definition}
\newtheorem{remark}[thm]{Remark}
\newcommand{\Bin}{\mathrm{Bin}}
\newcommand{\Ex}{\mathbb{E}}
\newcommand{\ex}{\mathrm{ex}}
\newcommand{\A}{\mathcal{A}}
\newcommand{\bU}{\hat{U}}
\newcommand{\B}{\mathcal{B}}
\newcommand{\bH}{\mathbf{H}}
\newcommand{\bB}{\mathfrak{B}}
\newcommand{\bG}{\mathbf{G}}
\newcommand{\bomega}{\mathbf{\omega}}
\newcommand{\bp}{\mathbf{p}}
\renewcommand{\P}{\mathcal{P}}
\newcommand{\N}{\mathbb{N}}
\newcommand{\eps}{\varepsilon}
\renewcommand{\S}{\mathcal{S}}
\newcommand{\bUU}{\hat{\mathcal{U}}}
\author{Wojciech Samotij\thanks{Trinity College, Cambridge CB2 1TQ, UK. E-mail address: {\tt ws299@cam.ac.uk}. Research supported in part by a~Trinity College JRF.}}
\title{Stability results for random discrete structures}
\date{\today}
\begin{document}

\maketitle

\begin{abstract}
  Two years ago, Conlon and Gowers, and Schacht proved general theorems that allow one to~transfer a large class of extremal combinatorial results from the~deterministic to the~probabilistic setting. Even though the~two papers solve the~same set of long-standing open problems in~probabilistic combinatorics, the~methods used in them vary significantly and therefore yield results that are not comparable in certain aspects. the~theorem of Schacht can be applied in~a~more general setting and yields stronger probability estimates, whereas the~one of Conlon and Gowers also implies random versions of some structural statements such as the~famous stability theorem of Erd{\H o}s and Simonovits. In this paper, we bridge the~gap between these two transference theorems. Building on the~approach of Schacht, we prove a general theorem that allows one to transfer deterministic stability results to the~probabilistic setting that is somewhat more general and stronger than the~one obtained by Conlon and Gowers. We then use this theorem to derive several new results, among them a random version of the~Erd{\H o}s-Simonovits stability theorem for~arbitrary graphs. the~main new idea, a refined approach to multiple exposure when considering subsets of binomial random sets, may be of independent interest.
\end{abstract}

\section{Introduction}

One of the~most active areas of research within combinatorics has always been the~study of various \emph{extremal problems}. In the~most classical sense, extremal results in combinatorics give answers to~questions of the~following general form: For a finite set $X$, what is the~largest subset of $X$ that does not contain subsets of a particular type? Two archetypal examples of such results are the~famous theorem of Tur{\'a}n~\cite{Tu}, which determines the~maximum number of edges in an~$n$-vertex graph that does not contain a complete subgraph on $k$ vertices, and the~celebrated theorem of~Szemer{\'e}di~\cite{Sz}, which proves that for every positive $\delta$, every subset $A$ of $\{1, \ldots, n\}$ that satisfies $|A| \geq \delta n$ contains a $k$-term arithmetic progression, provided that $n$ is sufficiently large (as a function of $k$ and $\delta$).

Extremal results are often accompanied by their structural refinements. Among them, the~most notable are various \emph{stability results}, which have the~following general form: Suppose that a subset $Y \subseteq X$ does not contain subsets of a particular type and, moreover, the~number of elements in $Y$ is maximum possible (or close to maximum possible) among all subsets of $X$ with this property (of~not containing subsets of some type). Then $Y$ is very structured. Here, Tur{\'a}n's theorem can again serve as an~example as it not only determines the~maximum number of edges in an~$n$-vertex graph with no $k$-vertex complete subgraph, but also shows that (up to isomorphism) the~only $K_k$-free $n$-vertex graph with this many edges is the~complete $(k-1)$-partite graph with partite sets of equal or nearly equal size, denoted by $T_{k-1}(n)$ and referred to as the~Tur{\'a}n graph. The~stability statement of the~type we will be considering was only proved much later by Erd{\H o}s and Simonovits~\cite{Si}. It states that in fact every $K_k$-free $n$-vertex graph whose number of~edges is `close' to the~number of~edges in $T_{k-1}(n)$ must be very `close' to the~graph $T_{k-1}(n)$, see Theorem~\ref{thm:stability}.

A dominant trend in probabilistic combinatorics in the~past two decades has been the~formulation and study of various `sparse random' analogues of classical extremal problems such as the~aforementioned theorems of Tur{\'a}n and Szemer{\'e}di. Usually, these problems are studied in the~\emph{binomial random model}. For a finite set $X$ and a real number $p \in [0,1]$, we denote by $X_p$ the~\emph{$p$-random subset of $X$}, that is, the~random subset of $X$ such that each element of $X$ belongs to $X_p$ with probability $p$, independently of all other elements. A sparse random analogue of the~theorem of~Szemer{\'e}di is the~assertion that with probability close to $1$, every subset $A$ of $\{1, \ldots, n\}_p$ that satisfies $|A| \geq \delta np$ contains a $k$-term arithmetic progression, provided that $p$ is sufficiently large as~a~function of $n$, $k$, and $\delta$; note that $np$ is the~expected size of the~random set $\{1, \ldots, n\}_p$.

Various problems of this type, in particular the~sparse random version of~Szemer{\'e}di's theorem (Theorem~\ref{thm:Szemeredi-np}), have attracted a tremendous amount of attention from many leading researchers. the~main goal has been to find the~smallest sequence of probabilities $(p_n)$ such that the~statements as the~one above hold \emph{asymptotically almost surely} (a.a.s.~for short), that is, with probability tending to $1$ as $n$, the~size of the~considered structure, tends to infinity. There have been many results in~various special cases, but the~most important general questions, most notably the~random version of Tur{\'a}n's theorem known as the~Haxell-Kohayakawa-{\L}uczak conjecture~\cite{HaKoLu} (or the~Kohayakawa-{\L}uczak-R{\"o}dl conjecture~\cite{KoLuRo}) had remained open until very recently, when all those efforts culminated in two breakthrough results of Conlon and Gowers~\cite{CoGo} and Schacht~\cite{Sc}, which provided a very general and powerful framework to handle problems of this type. The~random versions of~Szemer{\'e}di's and Tur{\'a}n's theorems followed as simple corollaries.

Following~\cite{CoGo}, let us say that a set $A$ of integers is \emph{$(\delta,k)$-Szemer{\'e}di} if every subset of $A$ of~cardinality at least $\delta|A|$ contains a $k$-term arithmetic progression. Also, let us abbreviate $\{1, \ldots, n\}$ by $[n]$. the~methods of Conlon and Gowers, and Schacht imply that in the~$p$-random subset of $[n]$, the~property of being $(\delta,k)$-Szemer{\'e}di has a threshold at $n^{-1/(k-1)}$.

\begin{thm}[{\cite{CoGo,Sc}}]
  \label{thm:Szemeredi-np}
  For every positive $\delta$ and every integer $k$ with $k \geq 3$, there exist positive constants $c$ and $C$ such that
  \[
  \lim_{n \to \infty} P(\text{$[n]_{p_n}$ is $(\delta,k)$-Szemer{\'e}di}) =
  \begin{cases}
    1, & \text{if $p_n \geq Cn^{-\frac{1}{k-1}}$}, \\
    0, & \text{if $p_n \leq cn^{-\frac{1}{k-1}}$}.
  \end{cases}
  \]
\end{thm}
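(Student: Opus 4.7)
The proof naturally splits into the $0$-statement ($p_n \leq c n^{-1/(k-1)}$) and the $1$-statement ($p_n \geq C n^{-1/(k-1)}$). The $0$-statement is the easier half and admits a direct first/second moment argument, while the $1$-statement is the deep one, requiring the transference machinery of Conlon--Gowers or Schacht on top of Szemer{\'e}di's theorem itself.

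For the $0$-statement, the plan is the standard deletion argument. Let $X = |[n]_{p_n}|$ and let $Y$ be the number of $k$-term arithmetic progressions contained in $[n]_{p_n}$. Then $\Ex[X] = n p_n$ and $\Ex[Y] = \Theta(n^2 p_n^k)$, so the ratio satisfies $\Ex[Y]/\Ex[X] = \Theta(n p_n^{k-1}) = \Theta(c^{k-1})$ when $p_n \leq c n^{-1/(k-1)}$. By Chernoff, $X \geq \tfrac{1}{2} n p_n$ a.a.s., and by Markov, $Y \leq 2\Ex[Y]$ a.a.s. Choosing $c = c(\delta,k)$ sufficiently small, these two events together guarantee $Y < (1-\delta) X$; removing one element from each $k$-AP contained in $[n]_{p_n}$ then yields a $k$-AP-free subset of size at least $\delta X = \delta |[n]_{p_n}|$, witnessing that $[n]_{p_n}$ is \emph{not} $(\delta,k)$-Szemer{\'e}di.

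For the $1$-statement, the plan is to invoke the general transference theorem of~\cite{CoGo,Sc}. The deterministic input is Szemer{\'e}di's theorem in its supersaturated form: for every $\delta > 0$ and $k \geq 3$, there exists $\delta' > 0$ such that every $A \subseteq [n]$ with $|A| \geq (\delta/2)n$ contains at least $\delta' n^2$ arithmetic progressions of length $k$, for $n$ large. One then checks that the hypergraph $\mathcal{H}_n$ on vertex set $[n]$ whose edges are the $k$-APs satisfies the relevant boundedness/regularity conditions of the abstract transference theorem at density $p_n \geq C n^{-1/(k-1)}$ (this is precisely the threshold at which the expected number of edges through a typical vertex of $[n]_{p_n}$ is bounded below). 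The transference theorem then upgrades the dense statement to the sparse random one: a.a.s.\ every subset of $[n]_{p_n}$ of size at least $\delta |[n]_{p_n}|$ contains a $k$-AP.

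The main obstacle is entirely in the $1$-statement, and it is not in adapting anything to the Szemer{\'e}di setting specifically but in the general transference theorem itself, whose proof is substantial. Since the present paper builds directly on Schacht's approach (and in fact refines it to handle stability), the cleanest route is to cite the relevant abstract theorem from~\cite{Sc} (or~\cite{CoGo}) and merely verify its hypotheses for the $k$-AP hypergraph, rather than reproving transference here. The only genuine calculation is bookkeeping: tracking that the supersaturated version of Szemer{\'e}di's theorem supplies the ``$K$-bounded'' edge count needed, and that $p_n \geq C n^{-1/(k-1)}$ is the correct threshold at which the abstract machinery kicks in.
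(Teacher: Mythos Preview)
The paper does not contain a proof of this theorem; it is merely quoted as a result of Conlon--Gowers and Schacht and serves as motivation. So there is no ``paper's own proof'' to compare your proposal against. Your outline is in fact an accurate summary of how the argument runs in those references: the $0$-statement is a deletion argument and the $1$-statement follows by feeding the supersaturated form of Szemer{\'e}di's theorem into the abstract transference theorem, after checking the $(K,\bp)$-boundedness of the $k$-AP hypergraph at $p_n = n^{-1/(k-1)}$.

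One small correction to your $0$-statement sketch: Markov's inequality does \emph{not} give $Y \leq 2\Ex[Y]$ a.a.s., only with probability at least $1/2$. To get an a.a.s.\ bound you need a second-moment (Chebyshev) argument. This works: the variance of $Y$ is dominated by pairs of $k$-APs sharing a single element, giving $\mathrm{Var}(Y) = O(n^3 p_n^{2k-1})$ and hence $\mathrm{Var}(Y)/\Ex[Y]^2 = O(1/(np_n)) \to 0$ whenever $np_n \to \infty$. In the complementary regime $np_n = O(1)$ the random set a.a.s.\ has fewer than $k$ elements (or is empty), and the conclusion is immediate.
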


Given two graphs $G$ and $H$, let $\ex(G,H)$ denote the~maximum number of edges in a subgraph of $G$ that is \emph{$H$-free}, that is, does not contain $H$ as a subgraph, i.e.,
\[
\ex(G,H) = \max \{ e(G') \colon G' \subseteq G \text{ and } G' \nsupseteq H \}.
\]
The aforementioned theorem of Tur{\'a}n determines $\ex(K_n,K_k)$ for all $k$ and $n$. It was later generalised by Erd{\H o}s and Stone~\cite{ErSt}, and Erd{\H o}s and Simonovits~\cite{ErSi}, who proved that for an~arbitrary graph $H$ with at least one edge,
\begin{equation}
  \label{eq:ErSt}
  \ex(K_n,H) \leq \left(1 - \frac{1}{\chi(H) - 1} + o(1)\right) \binom{n}{2},
\end{equation}
where $\chi(H)$ is the~chromatic number of $H$. On the~other hand, it is easy to see that for every graph $G$,
\begin{equation}
  \label{eq:exGH}
  \ex(G,H) \geq \left(1 - \frac{1}{\chi(H) - 1}\right) e(G).
\end{equation}
Erd{\H o}s and Simonovits~\cite{Si} proved the~following structural refinement of~\eqref{eq:ErSt}, known since under the~name of Erd{\H o}s-Simonovits stability theorem:
\begin{thm}
  \label{thm:stability}
  For every positive $\delta$ and every graph $H$ with at least one edge, there exists a~positive $\eps$ such that every $n$-vertex $H$-free graph with at least $\ex(K_n,H) - \eps n^2$ edges can be made $(\chi(H) - 1)$-partite by removing from it at most $\delta n^2$ edges.
\end{thm}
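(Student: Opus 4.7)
The natural route is via Szemer\'edi's regularity lemma combined with the stability version of Tur\'an's theorem for complete graphs. Let $\chi = \chi(H)$. Given $\delta > 0$, I would first fix auxiliary constants satisfying $\eps \ll d \ll \eta \ll \delta$, with $\eta$ additionally small enough for a counting lemma to locate $H$ inside $\chi$ pairwise $\eta$-regular pairs of density at least $d$.

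Let $G$ be an $n$-vertex $H$-free graph with $e(G) \geq \ex(K_n,H) - \eps n^2 \geq \bigl(1 - \tfrac{1}{\chi-1}\bigr)\binom{n}{2} - \eps n^2$, where the second inequality uses~\eqref{eq:exGH} with $G = K_n$. Apply the regularity lemma to $G$ with parameter $\eta$ to obtain an equitable partition $V_0 \cup V_1 \cup \cdots \cup V_M$ with $|V_0| \leq \eta n$ in which all but at most $\eta\binom{M}{2}$ pairs are $\eta$-regular. Define the \emph{reduced graph} $R$ on $[M]$ by joining $i$ and $j$ whenever $(V_i,V_j)$ is $\eta$-regular of density at least $d$. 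A standard accounting shows that the edges of $G$ not witnessed by edges of $R$ (edges inside some $V_i$, edges touching $V_0$, and edges in irregular or low-density pairs) number at most $(\eta + d + o(1))n^2$.

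The key combinatorial point is that $R$ is $K_\chi$-free: any copy of $K_\chi$ in $R$ would provide, via the counting lemma, a copy in $G$ of an arbitrarily large balanced blow-up of $K_\chi$, into which $H$ embeds since $\chi(H) = \chi$, contradicting $H$-freeness. Hence Tur\'an's theorem yields $e(R) \leq \bigl(1 - \tfrac{1}{\chi-1}\bigr)\binom{M}{2}$, and combining this with the accounting above forces
$e(R) \geq \bigl(1 - \tfrac{1}{\chi-1}\bigr)\binom{M}{2} - \delta'\binom{M}{2}$ for some $\delta' \to 0$ as $\eps, \eta, d \to 0$. I then invoke the stability version of Tur\'an's theorem for cliques (a self-contained statement admitting a short direct proof by induction on $\chi$) to conclude that $R$ becomes $(\chi-1)$-partite after deleting at most $\delta'' \binom{M}{2}$ edges.

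To finish, lift the $(\chi-1)$-colouring of $[M]$ to $V(G)$ by colouring $V_i$ with the colour of $i$ and assigning $V_0$ arbitrarily. The edges of $G$ inside a colour class come only from edges within some $V_i$, edges incident to $V_0$, edges in irregular or low-density pairs, or edges of $G$ sitting in the pairs corresponding to the $\delta''\binom{M}{2}$ monochromatic edges of $R$ (each contributing at most $(n/M)^2$); adding these contributions gives at most $\delta n^2$ provided the cascade $\eps \ll d \ll \eta \ll \delta$ is chosen aggressively enough. The main obstacle I anticipate is precisely this tuning of parameters: $\eta$ must be simultaneously small enough for the counting lemma to rule out $K_\chi$ in $R$ and small enough that the quantitative loss from Tur\'an stability on $R$ is absorbed into the final $\delta n^2$ slack on $G$, all while keeping $M$ finite so that the per-pair overhead $(n/M)^2$ does not blow up. Everything beyond this bookkeeping is routine.
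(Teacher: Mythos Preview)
The paper does not actually prove Theorem~\ref{thm:stability}; it is stated as the classical Erd\H{o}s--Simonovits stability theorem, attributed to~\cite{Si}, and then invoked as a black box in Section~\ref{sec:proofs-new-results} when verifying $(\alpha,\bB)$-stability in the derivation of Theorem~\ref{thm:stability-Gnp-new}. So there is no in-paper proof to compare your proposal against.

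That said, your sketch is the standard regularity-based argument and is sound in outline. One point to fix: your parameter cascade is written in the wrong order. The counting (embedding) lemma needs the regularity parameter to be small relative to the density threshold, so you want $\eta \ll d$, not $d \ll \eta$; the correct chain is $\eps \ll \eta \ll d \ll \delta$ (with a further dependence on $v(H)$ hidden in the choice of $\eta$). You in fact say as much verbally (``$\eta$ additionally small enough for a counting lemma''), so this looks like a slip in the displayed hierarchy rather than a conceptual error. Also note a mild circularity to manage: you are invoking ``the stability version of Tur\'an's theorem for cliques'' to prove the general stability theorem, so be sure the clique case you cite is the elementary one (F\"uredi's argument or a direct induction on $\chi$) and not Theorem~\ref{thm:stability} itself specialised to $H=K_\chi$. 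With those two caveats the bookkeeping you describe goes through.
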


Let $G(n,p)$ denote the~binomial random graph on the~vertex set $[n]$ with edge probability $p$ and note that in our notation, $G(n,p) = E(K_n)_p$. A~notion that is intrinsic to the~study of subgraphs of random graphs is that of \emph{$2$-density}. Let $H$ be a~graph with at least $3$ vertices. We define the~$2$-density of $H$, denoted by $m_2(H)$, by
\[
m_2(H) = \max\left\{ \frac{e(K) - 1}{v(K) - 2} \colon K \subseteq H \text{ with } v(K) \geq 3\right\},
\]
where $v(K)$ and $e(K)$ denote the~number of vertices and the~number of edges of $K$, respectively. A~fairly straightforward computation (see, for example, \cite{Sc}) shows that for every $H$, if $p_n \ll n^{-1/m_2(H)}$, then a.a.s.~the number of copies of some $H' \subseteq H$ in $G(n,p)$ is much smaller than $\binom{n}{2}p$, the~expected number of edges in $G(n,p)$, and therefore $\ex(G(n,p),H) = (1+o(1))\binom{n}{2}p$, which is very far from~\eqref{eq:exGH}. Haxell, Kohayakawa, and {\L}uczak~\cite{HaKoLu} conjectured that once $p_n \geq C_H n^{-1/m_2(H)}$, then the~trivial estimate~\eqref{eq:exGH} becomes essentially best possible and hence a~natural generalisation of~\eqref{eq:ErSt} holds in $G(n,p)$. Their conjecture was confirmed by Conlon and Gowers, and Schacht.
\begin{thm}[{\cite{CoGo,Sc}}]
  \label{thm:Turan-Gnp}
  For every graph $H$ with at least one vertex contained in at least two edges and every positive $\eps$, there exist positive constants $c$ and $C$ such that
  \[
  \lim_{n \to \infty} P\left( \ex(G(n,p_n), H) \leq \left(1 - \frac{1}{\chi(H) - 1} + \eps\right)\binom{n}{2}p_n\right) =
  \begin{cases}
    1, & \text{if $p_n \geq Cn^{-1/m_2(H)}$},\\
    0, & \text{if $p_n \leq cn^{-1/m_2(H)}$}.
  \end{cases}
  \]
\end{thm}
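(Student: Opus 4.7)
The statement splits into a $0$-statement (below the threshold) and a $1$-statement (above the threshold). I would dispatch the former by a first-moment deletion argument and the latter, which is the genuinely hard direction, by invoking Schacht's transference theorem.

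For the $0$-statement, let $H^{*} \subseteq H$ with $v(H^{*}) \geq 3$ attain the maximum defining $m_2(H)$, i.e., $m_2(H) = (e(H^{*})-1)/(v(H^{*})-2)$. A direct computation shows that at $p_n = c n^{-1/m_2(H)}$ the expected number of labelled copies of $H^{*}$ in $G(n,p_n)$ equals $\Theta\bigl(c^{e(H^{*})-1} n^2 p_n\bigr)$, and a routine second-moment calculation gives concentration within a constant factor. Deleting one edge from each copy of $H^{*}$ produces an $H^{*}$-free, and hence $H$-free, subgraph of $G(n,p_n)$ with at least $\bigl(1 - o(1) - O(c^{e(H^{*})-1})\bigr)\binom{n}{2}p_n$ edges; choosing $c$ sufficiently small in terms of $\eps$ pushes this above $(1 - 1/(\chi(H)-1) + \eps)\binom{n}{2}p_n$, as required.

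For the $1$-statement, the deterministic input is the Erd\H{o}s--Stone inequality~\eqref{eq:ErSt}. I would apply Schacht's transference theorem, which lifts an extremal bound of this form from $K_n$ to $G(n,p_n)$ once the probabilistic threshold $n^{-1/m_2(H)}$ has been identified. Verifying its hypotheses amounts to a quantitative supersaturation statement --- every $G \subseteq K_n$ with more than $(1 - 1/(\chi(H)-1) + \eps/2)\binom{n}{2}$ edges contains $\Omega_{\eps,H}(n^{v(H)})$ copies of $H$, a standard consequence of Erd\H{o}s--Stone combined with an averaging argument --- together with a counting argument that uses the definition of $m_2(H)$ to show that small random subsets of $E(K_n)$ interact with these copies of $H$ in a robust way.

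The main obstacle is the transference step itself, where the bulk of the work in~\cite{Sc} is spent. A naive union bound over the $2^{\binom{n}{2}}$ candidate $H$-free subgraphs of $G(n,p_n)$ is futile, and Schacht's insight is to reveal $G(n,p_n)$ in $r = r(\eps,H)$ disjoint rounds, each of density $\Theta(n^{-1/m_2(H)})$, and to show via Janson's inequality applied to copies of the densest subgraph $H^{*}$ that each round a.a.s.\ eliminates a positive fraction of the surviving candidate $H$-free subgraphs whose edge count exceeds the target $(1 - 1/(\chi(H)-1) + \eps)\binom{n}{2}p_n$. Making the supersaturation bound and the Janson concentration interact at the right quantitative level --- crucially with the choice of $H^{*}$ guaranteeing Poisson-type behaviour of copy counts exactly at the threshold $n^{-1/m_2(H)}$ --- is the combinatorial and probabilistic heart of the argument.
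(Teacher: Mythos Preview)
The paper does not supply its own proof of this theorem; it is stated as a known result cited from~\cite{CoGo,Sc}, so there is nothing in the paper to compare against directly. Your high-level plan is correct and matches what those references do: the $0$-statement via deletion of copies of a densest subgraph $H^{*}$ (your computation that the expected number of copies of $H^{*}$ is $\Theta(c^{e(H^{*})-1}n^{2}p_{n})$ is right, and $H^{*}$ is automatically $2$-balanced, so the second-moment concentration goes through), and the $1$-statement by invoking Schacht's transference theorem after verifying supersaturation (his $\alpha$-density) and the $(K,\bp)$-boundedness hypothesis.

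That said, your final paragraph misdescribes Schacht's internal mechanism. His argument does not use Janson's inequality, and it does not proceed by ``eliminating a positive fraction of the surviving candidate $H$-free subgraphs'' round by round. Rather, it is an induction on a parameter $i\in\{0,\ldots,k\}$ (with $k=e(H)$) establishing that for every sufficiently dense $W\subseteq V(H_{n})_{q}$ the set $E_{U}^{i}(W)$ of hyperedges of $H_{n}$ with at least $i$ vertices in $W$ is large. The multi-round exposure enters in the inductive step $i\to i+1$: in each round, either many elements of $E_{U}^{i+1}(W)$ are found directly, or a set of ``rich'' vertices (those completing many elements of $E_{U}^{i}$ to elements of $E_{U}^{i+1}$) grows by a positive fraction of $|V|$; the latter can happen only boundedly many times. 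The probabilistic tools are Chernoff's inequality and an upper-tail lemma for degree-square sums (Lemma~\ref{lemma:upper-tail} here), not Janson. Since you treat Schacht's theorem as a black box, this inaccuracy does not break your argument, but it would mislead a reader about where the difficulty lies.
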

Theorem~\ref{thm:Turan-Gnp} showed a~certain advantage of the~approach of Schacht~\cite{Sc} over the~methods of Conlon and Gowers~\cite{CoGo}, which allowed to prove the~above statement only in the~case when $H$ is \emph{strictly $2$-balanced}\footnote{Actually, the~methods of~\cite{CoGo} allow to prove the~$1$-statement in Theorem~\ref{thm:Turan-Gnp} and Theorem~\ref{thm:stability-Gnp} also in the~case when $H$ is only $2$-balanced, i.e., if $m_2(H') \leq m_2(H)$ for all $H' \subseteq H$, under the~somewhat stronger assumption that $p_n \geq n^{-1/m_2(H)}(\log n)^C$ for some constant $C$.}; a~graph $H$ is strictly $2$-balanced if it has the~largest $2$-density among all of its subgraphs or, in other words, if every proper subgraph $H' \subsetneq H$ satisfies $m_2(H') < m_2(H)$. Moreover, Schacht's approach yielded an~asymptotically best possible estimate on the~rate of convergence in the~above limit, showing that the~`error probability' is $\exp(-\Omega(n^2p_n))$, whereas the~other approach yields only an~$n^{-\omega(1)}$ bound. On the~other hand, Conlon and Gowers were able to prove the~following sparse random analogue of the~Erd{\H o}s-Simonovits stability theorem (Theorem \ref{thm:stability}), which did not follow from Schacht's general theorem.

\begin{thm}[{\cite{CoGo}}]
  \label{thm:stability-Gnp}
  For every strictly $2$-balanced graph $H$ and every positive $\delta$, there exist positive constants $C$ and $\eps$ such that if $p_n \geq Cn^{-1/m_2(H)}$, then a.a.s.~every $H$-free subgraph of $G(n,p_n)$ with at least $\left(1 - \frac{1}{\chi(H)-1} -\eps\right)\binom{n}{2}p_n$ edges may be made $(\chi(H)-1)$-partite by removing from it at most $\delta n^2p_n$ edges.
\end{thm}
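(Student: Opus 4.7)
My plan is to derive Theorem~\ref{thm:stability-Gnp} from a general probabilistic transference theorem (the main technical result of the paper, proved in a later section) by feeding it Theorem~\ref{thm:stability} as deterministic input. Fix $\delta>0$, and for every $\delta'>0$ let $\mathcal{Q}_{\delta'}(n)$ denote the family of graphs on $[n]$ that cannot be made $(\chi(H)-1)$-partite by deleting at most $\delta' n^2$ edges; this is a monotone increasing family. Theorem~\ref{thm:stability}, stated contrapositively, asserts the existence of an $\eps>0$ such that every $H$-free member of $\mathcal{Q}_\delta(n)$ has at most $(1-1/(\chi(H)-1)-\eps)\binom{n}{2}$ edges. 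The random statement to prove is the analogue in which $\mathcal{Q}_\delta(n)$ is replaced by the rescaled family $\mathcal{Q}_{\delta p}(n)$ and $\binom{n}{2}$ by $\binom{n}{2}p$.

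The general transference theorem should read schematically as follows: if $H$ is strictly $2$-balanced and $\mathcal{Q}$ is a monotone increasing graph family on $[n]$ that remains essentially unchanged under random edge deletion of negligible density, then for $p \geq C n^{-1/m_2(H)}$, a.a.s.\ every $H$-free subgraph $G'\subseteq G(n,p)$ lying in $\mathcal{Q}$ satisfies
\[
  e(G') \;\le\; \left( \frac{\max\{e(J) : J\subseteq K_n,\ J\in\mathcal{Q},\ J\text{ is }H\text{-free}\}}{\binom{n}{2}} + o(1) \right) \binom{n}{2} p.
\]
For our application $\mathcal{Q}=\mathcal{Q}_{\delta p}(n)$ is monotone and has the required robustness: deleting at most $(\delta p/2) n^2$ edges from a member of $\mathcal{Q}_{\delta p}(n)$ keeps the result inside $\mathcal{Q}_{\delta p/2}(n)$, which is qualitatively the same family.

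For the transference theorem itself, I follow Schacht's high-level scheme, replacing his single-exposure coupling by the refined multiple-exposure scheme advertised in the abstract. Decompose $G(n,p)$ as a union of several almost-independent binomial subgraphs $G_1,\ldots,G_K$ with carefully chosen densities. Strict $2$-balancedness of $H$ together with standard first- and second-moment calculations guarantees that, except on an event of probability $\exp(-\Omega(n^2 p))$, for every $H$-free $\mathcal{Q}$-subgraph $G^*$ of the partial exposure $G_1\cup\cdots\cup G_{K-1}$ having near-maximum edge count, the set of edges $e\in E(K_n)$ such that $G^*\cup\{e\}$ already contains a copy of $H$ has size at least $\eps\binom{n}{2}$; this is precisely where the deterministic stability statement is invoked, by applying it to $G^*$. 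Conditioning on the earlier exposures and taking a union bound over the candidate $G^*$'s then shows that a.a.s.\ no $H$-free $\mathcal{Q}$-subgraph of $G(n,p)$ can exceed the target edge count.

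The principal obstacle is making the structural condition $G^*\in\mathcal{Q}$ propagate through the exposures. Schacht's original single-exposure argument controls only the edge count of the candidate subgraph, which is obviously monotone, while preserving ``$\delta n^2 p$-far from $(\chi(H)-1)$-partite'' through intermediate stages is much more delicate. The refined multiple-exposure scheme is designed to choose the densities of the $G_i$'s so that the total number of edges possibly removed during the probabilistic manipulations at any given stage is $o(n^2 p)$, comfortably below $\delta n^2 p$. This separation of scales between the deterministic parameter $\delta$ and the probabilistic error is what conserves $\mathcal{Q}$ along the iteration, allowing the inductive hypothesis to be fed into the next exposure and the argument to be closed.
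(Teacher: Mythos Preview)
First, a framing point: Theorem~\ref{thm:stability-Gnp} is quoted in the paper as a result of Conlon--Gowers and is not proved there. What the paper does prove is the strictly stronger Theorem~\ref{thm:stability-Gnp-new}, deduced in Section~\ref{sec:proofs-new-results} from the abstract transference theorem (Theorem~\ref{thm:main}). So the relevant comparison is with that derivation and with the proof of Theorem~\ref{thm:main}/Lemma~\ref{lemma:main}.

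Your proposal has a genuine gap at the point where you invoke the deterministic input. You write that for an $H$-free subgraph $G^*$ of the partial exposure $G_1\cup\cdots\cup G_{K-1}$ lying in $\mathcal{Q}$, ``the set of edges $e\in E(K_n)$ such that $G^*\cup\{e\}$ contains a copy of $H$ has size at least $\eps\binom{n}{2}$; this is precisely where the deterministic stability statement is invoked, by applying it to $G^*$.'' But $G^*$ has only $O(n^2p)$ edges, while Theorem~\ref{thm:stability} is a statement about graphs with $\Theta(n^2)$ edges; it tells you nothing about $G^*$. The whole difficulty of the transference is to bridge the dense and sparse scales, and this is exactly what Schacht's induction on $i$ (Lemma~\ref{lemma:main}) accomplishes: the deterministic hypothesis is invoked only at $i=0$, on a \emph{dense} set $U\subseteq E(K_n)$, and each inductive step trades one more level of sparseness for one more round of exposure. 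Your sketch collapses this induction to a single step and applies the dense theorem at the sparse scale, which does not work.

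There is also a scaling mismatch in your black-box formulation. You take $\mathcal{Q}=\mathcal{Q}_{\delta p}(n)$ and feed the deterministic maximum $\max\{e(J):J\in\mathcal{Q},\ J\text{ $H$-free}\}$ into the transference conclusion. But Erd\H{o}s--Simonovits with parameter $\delta p$ yields only $\eps=\eps(\delta p)$, which tends to $0$ as $p\to 0$; the resulting bound is vacuous in the regime $p=n^{-1/m_2(H)}$. The paper avoids this by encoding the structural side via a \emph{fixed} family $\B_n$ of target sets (edge sets of complete $(\chi(H)-1)$-partite graphs) and the notion of $(\alpha,\bB)$-stability (Definition~\ref{dfn:stability}), which is verified once at the dense scale (using Theorem~\ref{thm:stability} together with the removal lemma, Theorem~\ref{thm:graph-removal-lemma}, to pass from ``few copies'' to ``$H$-free''). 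The rescaling from $\delta|V(H_n)|$ to $\delta q|V(H_n)|$ is then produced automatically by the conclusion of Theorem~\ref{thm:main}; and the propagation of the structural condition through the rounds---the issue you correctly identify as the ``principal obstacle''---is handled not by a separation of scales in the exposure densities, but by the coupling argument of Claim~\ref{claim:star}, which shows that in a \emph{typical} decomposition $U_q=U_{q_1}\cup\cdots\cup U_{q_R}$ the condition $|W\setminus B|\ge\delta q|V|$ descends to $|W_s\setminus B|\ge(\delta/2)q_s|V|$ for each $s$.
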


In view of these disparities, it is natural to ask whether some synthesis of the~methods of~\cite{CoGo} and~\cite{Sc} can bridge the~gap between their results; that is, whether one can prove a~transference theorem that is applicable in all the~settings in which Schacht's result can be applied, gives exponentially decaying bounds on the~`error probability', and yet implies structural statements. In this paper, we give an~affirmative answer to this question. We show how the~approach of Schacht can be adapted to yield structural results of the~form of Theorem~\ref{thm:stability-Gnp} in the~cases where the~methods of Conlon and Gowers are not applicable. We prove a~version of the~general transference theorem from~\cite{Sc} tailored for stability statements. As corollaries of this general theorem, we then derive several new results. In particular, we remove the~assumption that $H$ is (strictly) $2$-balanced from the~statement of Theorem~\ref{thm:stability-Gnp}, where we also improve the~implicit probability estimate from $n^{-\omega(1)}$ to $\exp(-\Omega(n^2p_n))$, which is asymptotically best possible. Finally, we remark that our approach removes the~somewhat artificial condition $p_n \ll 1$ present in the~general transference theorems of both Conlon and Gowers~\cite{CoGo}, and Schacht~\cite{Sc} (the case $p_n = \Omega(1)$ in Theorems~\ref{thm:Turan-Gnp} and~\ref{thm:stability-Gnp} did not follow directly from the~respective transference theorem and required additional arguments). We postpone the~formulation of our main result, Theorem~\ref{thm:main}, to Section~\ref{sec:main-results} and first discuss several of its most important corollaries.

\subsection{New results}

\label{sec:new-results}

In this section, we give a~brief overview of the~applications of our main result, Theorem~\ref{thm:main}. The~proofs of these statements are given in Section~\ref{sec:proofs-new-results}.

\subsubsection{Graphs}

\label{sec:new-graphs}

Our first result generalizes and strengthens Theorem~\ref{thm:stability-Gnp} by removing the~assumption that $H$ is (strictly) $2$-balanced and improving the~probability estimate implicit in the~``asymptotically almost surely'' statement. Theorem~\ref{thm:stability-Gnp-new}, conjectured by Kohayakawa, {\L}uczak, and R{\"o}dl~\cite{KoLuRo}, is an~essentially best possible random analogue of the~stability theorem of Erd{\H o}s and Simonovits (Theorem~\ref{thm:stability}).

\begin{thm}
  \label{thm:stability-Gnp-new}
  For every graph $H$ with at least one vertex contained in at least two edges and every positive $\delta$, there exist positive constants $C$ and $\eps$ such that if $p_n \geq Cn^{-1/m_2(H)}$, then with probability at least $1 - \exp(-\Omega(n^2p_n))$, every $H$-free subgraph of $G(n,p_n)$ with at least $\left(1 - \frac{1}{\chi(H)-1} -\eps\right)\binom{n}{2}p_n$ edges may be made $(\chi(H)-1)$-partite by removing from it at most $\delta n^2p_n$ edges.
\end{thm}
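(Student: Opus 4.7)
The strategy is to derive Theorem~\ref{thm:stability-Gnp-new} as a direct application of the general transference theorem (Theorem~\ref{thm:main}), with the Erd{\H o}s-Simonovits stability theorem (Theorem~\ref{thm:stability}) playing the role of the deterministic extremal input. The framework is the following: take as ground set $X = E(K_n)$; take as the family $\B$ of forbidden configurations the edge sets of copies of $H$ in $K_n$; and take as the deterministic property to be transferred the statement ``every $H$-free $A \subseteq E(K_n)$ with $|A| \geq (1 - \frac{1}{\chi(H)-1} - \eps)\binom{n}{2}$ can be made $(\chi(H)-1)$-partite by removing at most $\delta\binom{n}{2}$ edges''. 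Theorem~\ref{thm:stability} supplies this deterministic statement for suitable $\eps = \eps(\delta, H)$, and Theorem~\ref{thm:main} will then lift it to a.a.s.\ validity in $G(n,p_n) = E(K_n)_{p_n}$ with the required exponential error bound.

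The bulk of the work is verifying that the hypotheses of Theorem~\ref{thm:main} hold for this $\B$ at the threshold $p_n \geq Cn^{-1/m_2(H)}$. The key combinatorial input needed is a supersaturation / counting estimate: any $n$-vertex graph of edge density above the Tur{\'a}n threshold contains at least $\Omega(n^{v(H)})$ copies of $H$, and these copies are spread in $E(K_n)$ in a sufficiently uniform way to satisfy the boundedness condition of Theorem~\ref{thm:main}. When $H$ is strictly $2$-balanced this is essentially immediate from first-moment and Janson-type computations for the number of copies of $H$ in $G(n,p)$.

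The genuine difficulty is that $H$ is \emph{not} assumed to be strictly $2$-balanced; a proper subgraph $H^* \subsetneq H$ may attain $m_2(H^*) = m_2(H)$. In this regime, copies of $H$ in $G(n,p_n)$ at $p_n = \Theta(n^{-1/m_2(H)})$ are concentrated around shared dense cores, so standard one-shot exposure produces heavy-tailed dependencies that Schacht's original approach could not resolve in stability form. I would handle this by invoking the refined multiple exposure built into Theorem~\ref{thm:main}: decompose $G(n,p_n) = G_1 \cup G_2$ with $G_1$ the outcome of a sparser exposure, use $G_1$ only to harvest a well-distributed collection of copies of $H^*$, and use $G_2$ (still at density $\Theta(n^{-1/m_2(H)})$) to extend those cores to copies of $H$. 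Because all configurations across the two rounds are anchored to an $H^*$-copy whose probability of existence is already $\Theta(1)$, the joint structure admits Janson-style concentration and the $\exp(-\Omega(n^2 p_n))$ error bound is preserved.

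The main obstacle I expect is the bookkeeping: the stability conclusion of Theorem~\ref{thm:stability} is approximate rather than exact, so I must track that after applying Theorem~\ref{thm:main}, the approximate $(\chi(H)-1)$-partition guaranteed in each ``fibre'' of the multi-round exposure glues into a single global partition of the putative $H$-free subgraph, without inflating $\delta$ in an uncontrolled way. Once this is in place, the bounds on $C$ and $\eps$ in Theorem~\ref{thm:stability-Gnp-new} come out of Theorem~\ref{thm:main} directly. Finally, the case $p_n = \Omega(1)$ requires no separate argument, since Theorem~\ref{thm:main}—unlike the transference theorems of \cite{CoGo} and \cite{Sc}—does not impose $p_n \ll 1$.
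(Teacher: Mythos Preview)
Your high-level strategy---apply Theorem~\ref{thm:main} with $V(H_n)=E(K_n)$, edges of $H_n$ the copies of $H$, and the family $\B_n$ of complete $(\chi(H)-1)$-partite graphs---is exactly what the paper does. But the plan misidentifies where the work lies, and this leads to two concrete gaps.

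First, the ``genuine difficulty'' you describe is not one. The $(K,\bp)$-boundedness of $\bH$ at $p_n=n^{-1/m_2(H)}$ was already established in~\cite{Sc} for \emph{arbitrary} $H$, with no $2$-balancedness assumption; the paper simply cites this. The strict-$2$-balancedness restriction in Conlon--Gowers is an artefact of their method, not of the boundedness condition. Your proposed two-round exposure $G_1\cup G_2$ with Janson-type concentration is therefore unnecessary, and in any case the multi-round exposure lives \emph{inside} the proof of Theorem~\ref{thm:main} (via Lemma~\ref{lemma:main}), not in its application. Relatedly, your worry about ``gluing partitions across fibres'' is misplaced: Theorem~\ref{thm:main} is a black box whose conclusion is already the global statement you want, namely that every dense $W\subseteq V(H_n)_q$ either contains an edge of $H_n$ or satisfies $|W\setminus B|\le\delta q|V(H_n)|$ for some $B\in\B_n$.

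Second, and this is the real gap, you have not verified $(\alpha,\bB)$-stability (Definition~\ref{dfn:stability}). Citing Theorem~\ref{thm:stability} alone is insufficient: Erd\H{o}s--Simonovits applies only to \emph{$H$-free} graphs, whereas Definition~\ref{dfn:stability} requires that any $U$ with $|U|\ge(\alpha-\eps)|V(H_n)|$ either contain $\eps|H_n|$ copies of $H$ \emph{or} be $\delta$-close to some $B\in\B_n$. To bridge ``fewer than $\eps n^{v(H)}$ copies of $H$'' to ``close to $H$-free'' you need the graph removal lemma (Theorem~\ref{thm:graph-removal-lemma}); only then can Erd\H{o}s--Simonovits be invoked. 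The paper's proof does exactly this chaining of removal lemma plus stability. You should also check $|\B_n|=\exp(o(p_n n^2))$, which is immediate when $\chi(H)=2$ and, when $\chi(H)\ge 3$, uses that $H$ contains an odd cycle so $m_2(H)>1$.
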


\subsubsection{Hypergraphs}

\label{sec:new-hypergraphs}

Given two $\ell$-uniform hypergraphs $G$ and $H$, similarly as in the~graph case ($\ell = 2$), we define $\ex(G,H)$ to be the~maximum number of edges in an~$H$-free subhypergraph of $G$. Unlike the~graph case, if $\ell \geq 3$, then even the~asymptotic behaviour of the~function $\ex(K_n^{\ell}, H)$ is not known apart from some very specific hypergraphs $H$. Still, for an~arbitrary $H$, it makes sense to define the~\emph{Tur{\'a}n density} of $H$, denoted $\pi(H)$, by
\[
\pi(H) = \lim_{n \to \infty} \frac{\ex(K_n^{(\ell)},H)}{\binom{n}{\ell}},
\]
as a~standard averaging argument shows that the~above limit always exists and that $\pi(H) < 1$ for~every $H$. Moreover, it is not very hard to see that for every hypergraph $G$,
\begin{equation}
  \label{eq:exGH-hyper}
  \ex(G,H) \geq \pi(H)e(G).
\end{equation}

Let $G^{(\ell)}(n,p)$ denote the~binomial random $\ell$-uniform hypergraph on the~vertex set $[n]$ with edge probability $p$. Let $H$ be an~$\ell$-uniform hypergraph with at least $\ell+1$ vertices. Similarly as in the~graph case, we define the~\emph{$\ell$-density} of $H$, denoted by $m_\ell(H)$, by
\[
m_\ell(H) = \max\left\{ \frac{e(K) - 1}{v(K) - \ell} \colon K \subseteq H \text{ with } v(K) \geq \ell+1\right\}.
\]
As in the~case $\ell = 2$, one can see that if $p_n \ll n^{-1/m_\ell(H)}$, then $\ex(G^{\ell}(n,p),H) = (1+o(1))\binom{n}{\ell}p$, which is very far from~\eqref{eq:exGH-hyper}. A~natural generalisation of the~conjecture of Haxell, Kohayakawa, and~{\L}uczak~\cite{HaKoLu} would state that once $p_n \geq C_H n^{-1/m_k(H)}$, then the~trivial estimate~\eqref{eq:exGH-hyper} is essentially best possible. Such statement was proved by Conlon and Gowers, and Schacht.
\begin{thm}[{\cite{CoGo,Sc}}]
  \label{thm:hypergraph-Turan-Gnp}
  For every $\ell$-uniform hypergraph $H$ with at least one vertex contained in~at~least two edges and every positive $\eps$, there exist positive constants $c$ and $C$ such that
  \[
  \lim_{n \to \infty} P\left( \ex(G^{\ell}(n,p_n), H) \leq (\pi(H) + \eps)\binom{n}{\ell}p_n\right) =
  \begin{cases}
    1, & \text{if $p_n \geq Cn^{-1/m_\ell(H)}$},\\
    0, & \text{if $p_n \leq cn^{-1/m_\ell(H)}$}.
  \end{cases}
  \]
\end{thm}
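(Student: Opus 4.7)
The plan is to treat the two statements separately.

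First I would dispatch the $0$-statement via a deletion argument. Suppose $p_n \leq c n^{-1/m_\ell(H)}$ with $c$ small, and choose $K \subseteq H$ with $v(K) \geq \ell+1$ realising $(e(K) - 1)/(v(K) - \ell) = m_\ell(H)$. Then the expected number of copies of $K$ in $G^\ell(n,p_n)$ is $\Theta(n^{v(K)}p_n^{e(K)})$, which factors as $\Theta(\binom{n}{\ell}p_n) \cdot n^{v(K)-\ell}p_n^{e(K)-1}$, and the second factor is $o(1)$ when $c$ is sufficiently small. By Markov's inequality, a.a.s.\ one can delete one edge from each copy of $K$ to produce a $K$-free (hence $H$-free) subhypergraph with $(1-o(1))\binom{n}{\ell}p_n > (\pi(H)+\eps)\binom{n}{\ell}p_n$ edges, where we used $\pi(H) < 1$.

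For the $1$-statement I would invoke the general transference theorem (Theorem~\ref{thm:main} of this paper, which for this application specialises to Schacht's theorem from~\cite{Sc}) on the ambient set $X = E(K_n^\ell)$ with the forbidden configuration being a copy of $H$. The deterministic input required is simply the bound $\ex(K_n^\ell, H) \leq (\pi(H)+\eps/2)\binom{n}{\ell}$, which holds for large $n$ by the very definition of $\pi(H)$. The nontrivial step is to verify the appropriate ``$K$-boundedness'' hypothesis, which demands that copies of $H$ in $K_n^\ell$ be distributed uniformly enough across large subsets of $E(K_n^\ell)$. After a moment computation on rooted pairs of overlapping $H$-copies, this reduces to requiring $n^{v(K)-\ell}p_n^{e(K)-1} \geq 1$ for every $K \subseteq H$ with $v(K) \geq \ell+1$, which is exactly what $p_n \geq Cn^{-1/m_\ell(H)}$ guarantees.

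The hard part is the transference step itself, which Theorem~\ref{thm:main} encapsulates. Following Schacht's multi-round exposure strategy, I would write $G^\ell(n,p_n)$ as a union of many independent sparser layers $G^\ell(n,q)$ and argue inductively that, on the event that every layer exhibits a well-spread pattern of $H$-copies, any dense $H$-free subhypergraph must admit a short certificate describing its interaction with each successive layer. A union bound over these certificates is dominated by an exponentially small layerwise failure probability, and telescoping yields an overall error of $\exp(-\Omega(n^\ell p_n))$, which in particular implies the claimed limit. The delicate point is calibrating the number of layers and the per-layer density so that the $K$-boundedness hypothesis persists throughout the induction.
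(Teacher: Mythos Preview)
This theorem is not proved in the paper; it is quoted from~\cite{CoGo,Sc} as background, so there is no in-paper argument to compare against. Your outline does match the strategy of~\cite{Sc}, but two steps are not right as written.

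For the $0$-statement: when $p_n = cn^{-1/m_\ell(H)}$ and $K$ realises $m_\ell(H)$, the factor $n^{v(K)-\ell}p_n^{e(K)-1}$ equals $c^{e(K)-1}$, a small \emph{constant}, not $o(1)$. Markov's inequality then only bounds the probability that the number of $K$-copies exceeds $t\cdot c^{e(K)-1}\binom{n}{\ell}p_n$ by $O(1/t)$, leaving a residual probability independent of $n$; this does not yield an a.a.s.\ statement. You need a concentration estimate (second moment, or Janson) to show that a.a.s.\ the count is within a constant factor of its mean, after which choosing $c$ small enough so that the mean is below $(1-\pi(H)-\eps)\binom{n}{\ell}p_n$ finishes the deletion argument.

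For the $1$-statement: the deterministic input to Schacht's transference theorem is not the extremal bound $\ex(K_n^{(\ell)},H) \leq (\pi(H)+\eps/2)\binom{n}{\ell}$ but the \emph{supersaturation} statement that every $U \subseteq E(K_n^{(\ell)})$ with $|U| \geq (\pi(H)+\delta)\binom{n}{\ell}$ contains $\Omega(n^{v(H)})$ copies of $H$ --- this is the $\alpha$-dense hypothesis. It follows from Erd\H{o}s--Simonovits supersaturation, but it is a genuine ingredient you have omitted. Finally, Theorem~\ref{thm:main} of this paper is the stability-flavoured transference theorem; while one can recover the purely extremal statement from it by taking $\B_n = \{\emptyset\}$ and shifting $\alpha$, the direct tool is~\cite[Theorem~3.3]{Sc}, not Theorem~\ref{thm:main}.
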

Similarly as in Theorem~\ref{thm:Turan-Gnp}, the~methods of Conlon and Gowers allowed to prove the~above statement only in the~case when $H$ is \emph{strictly $\ell$-balanced}\footnote{Or when $H$ is just $\ell$-balanced, i.e., if $m_\ell(H') \leq m_\ell(H)$ for all $H' \subseteq H$, under the~somewhat stronger assumption that $p_n \geq n^{1-1/m_\ell(H)}(\log n)^C$ in the~$1$-statement.}, i.e., if it has the~largest $\ell$-density among all of its subhypergraphs or, in other words, if every proper subhypergraph $H' \subsetneq H$ satisfies $m_\ell(H') < m_\ell(H)$.

The techniques of Conlon and Gowers can also be used to transfer stability theorems for (strictly) $\ell$-balanced $\ell$-uniform hypergraphs into the~sparse random setting. Unfortunately, unlike the~graph case, where we have the~very general theorem of Erd{\H o}s and Simonovits (Theorem~\ref{thm:stability}), there is only a~handful of stability results known for $\ell$-uniform hypergraphs with $\ell \geq 3$. To the~best of our knowledge, the~only hypergraphs for which an~`Erd{\H o}s-Simonovits-type' stability result is known are: the~Fano plane (the $3$-uniform hypergraph with $7$ vertices and $7$ edges defined by the~points and lines of the~finite projective plane of order $2$), proved independently by Keevash and Sudakov~\cite{KeSu} and F{\"u}redi and Simonovits~\cite{FuSi}; the~$3$-book of $2$ pages (the $3$-uniform hypergraph on the~vertex set $\{1, \ldots, 5\}$ with edge set $\{123, 124, 345\}$), proved by Keevash and Mubayi~\cite{KeMu}; and the~$4$-book of $3$ pages (the $4$-uniform hypergraph on the~vertex set $\{1, \ldots, 7\}$ with edge set $\{1234, 1235, 1236, 4567\}$), proved by F{\"u}redi, Pikhurko, and Simonovits~\cite{FuPiSi}. Among these three hypergraphs, only the~Fano plane is strictly balanced and therefore, the~following result follows from the~methods of Conlon and Gowers.

\begin{thm}[{\cite{CoGo}}]
  \label{thm:Fano-stability}
  For every positive $\delta$, there exist positive constants $C$ and $\eps$ such that if $p_n \geq Cn^{-2/3}$, then a.a.s.~every subhypergragraph of $G^{(3)}(n,p_n)$ with at least $\left(\frac{3}{4} -\eps\right)\binom{n}{3}p_n$ edges that does not contain the~Fano plane may be made bipartite by removing from it at most $\delta n^3p_n$ edges.
\end{thm}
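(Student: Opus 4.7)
The plan is to derive Theorem~\ref{thm:Fano-stability} as a direct consequence of the paper's main transference theorem (Theorem~\ref{thm:main}), applied to the deterministic Fano plane stability theorem of Keevash-Sudakov~\cite{KeSu} and F\"uredi-Simonovits~\cite{FuSi}. That deterministic result says: for every $\delta' > 0$ there exists $\eps' > 0$ such that every $n$-vertex Fano-free $3$-uniform hypergraph with at least $(3/4 - \eps')\binom{n}{3}$ edges admits a vertex $2$-coloring all but $\delta' n^3$ of whose edges are non-monochromatic. In other words, it is exactly a ``\,$\pi(F) = 3/4$, stability to bipartite'' statement of the form that Theorem~\ref{thm:main} is engineered to transfer.

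Next I would compute the density parameter governing the threshold. Writing $F$ for the Fano plane, we have $v(F) = 7$ and $e(F) = 7$, so $(e(F) - 1)/(v(F) - 3) = 6/4 = 3/2$. A short case-check over the proper subhypergraphs $K \subsetneq F$ with $v(K) \geq 4$ shows that all of them have strictly smaller ratio $(e(K) - 1)/(v(K) - 3)$: removing any single edge leaves $6$ edges on $7$ vertices, giving $5/4 < 3/2$, and any further deletion only lowers the ratio, since in the Fano plane any two edges intersect in at most one vertex. Thus $F$ is strictly $3$-balanced with $m_3(F) = 3/2$, matching the threshold $n^{-2/3} = n^{-1/m_3(F)}$.

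The main step is then to feed the deterministic stability theorem into Theorem~\ref{thm:main}, with the ``forbidden configuration'' being a copy of $F$ together with the assertion that the host hypergraph is $\delta n^3$-far from bipartite. Theorem~\ref{thm:main} should output the statement that for $p_n \geq C n^{-2/3}$, with probability at least $1 - \exp(-\Omega(n^3 p_n))$ every Fano-free subhypergraph of $G^{(3)}(n, p_n)$ with at least $(3/4 - \eps)\binom{n}{3}p_n$ edges is $\delta n^3 p_n$-close to bipartite. This is strictly stronger than the a.a.s.~statement claimed.

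The only genuine issue I anticipate is formal: one has to verify that the hypotheses of Theorem~\ref{thm:main}, as stated in Section~\ref{sec:main-results}, are met. Specifically, one needs to check that ``being $\delta n^3$-far from bipartite'' has the monotonicity/upward-closure that the abstract framework requires (going from a subhypergraph to a larger one cannot make the distance to bipartite-ness decrease, which is immediate), and that $\pi(F) = 3/4$ is indeed the density threshold that the framework uses. Beyond this sanity check, the derivation is a mechanical instantiation of Theorem~\ref{thm:main} with $H = F$ and $\ell = 3$.
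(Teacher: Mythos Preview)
The paper does not actually prove Theorem~\ref{thm:Fano-stability}: it is quoted as a known consequence of the methods of Conlon and Gowers~\cite{CoGo}, and the paper's own new contributions are Theorems~\ref{thm:stability-Gnp-new}, \ref{thm:F5F7-stability-Gnp}, and~\ref{thm:stability-groups-random}. That said, your route---deriving the Fano statement from Theorem~\ref{thm:main}---is exactly parallel to how the paper handles those results in Section~\ref{sec:proofs-new-results}, so it is the natural thing to do and would work.

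Your proposal, however, misidentifies the actual hypotheses of Theorem~\ref{thm:main} and thereby skips the one nontrivial verification. You invoke only the deterministic Keevash--Sudakov/F\"uredi--Simonovits stability theorem, but $(\alpha,\bB)$-stability in the sense of Definition~\ref{dfn:stability} requires more: every $U \subseteq E(K_n^{(3)})$ with $|U| \geq (3/4 - \eps)\binom{n}{3}$ must either contain at least $\eps|H_n|$ \emph{copies} of the Fano plane or be $\delta$-close to some $B \in \B_n$. The deterministic stability result only treats Fano-\emph{free} $U$; to pass from ``fewer than $\eps n^7$ copies'' to ``Fano-free after deleting $o(n^3)$ edges'' one must invoke the hypergraph removal lemma (Theorem~\ref{thm:hypergraph-removal-lemma}), precisely as the paper does for graphs in the proof of Theorem~\ref{thm:stability-Gnp-new}. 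You also need to record that $\bH$ is $(K,\bp)$-bounded (shown in~\cite{Sc}), that $p_n^7|H_n| = \Theta(n^{7/3}) \to \infty$, and that $|\B_n| \leq 2^n = \exp(o(n^{7/3})) = \exp(o(p_n|V(H_n)|))$. By contrast, the ``monotonicity/upward-closure'' condition you worry about is not a hypothesis of Theorem~\ref{thm:main} at all: the framework works with a fixed family $\B_n$ of target sets, not with an abstract monotone property.
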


Our methods imply analogous statements for the~other two hypergraphs mentioned above. These statements, Theorem~\ref{thm:F5F7-stability-Gnp} below, can be deduced from the~arguments used in~\cite{CoGo} under the~somewhat stronger assumption that $p_n \geq n^{-1}(\log n)^C$.

\begin{thm}
  \label{thm:F5F7-stability-Gnp}
  For every positive $\delta$, there exist positive constants $C$ and $\eps$ such that if $p_n \geq Cn^{-1}$, then a.a.s.~the following holds:
  \begin{enumerate}[(i)]
  \item
    \label{item:F5-Gnp}
    For every subhypergraph of $G^{(3)}(n,p_n)$ with at least $\left(\frac{2}{9} - \eps\right)\binom{n}{3}p_n$ edges that does not contain the~$3$-book of $2$ pages, there exists a~partition of $[n]$ into sets $V_1$, $V_2$, and $V_3$ such that all but at most $\delta n^3 p_n$ edges have one point in each $V_i$.
    \item
      \label{item:F7-Gnp}
    For every subhypergraph of $G^{(4)}(n,p_n)$ with at least $\left(\frac{3}{8} - \eps\right)\binom{n}{4}p_n$ edges that does not contain the~$4$-book of $3$ pages, there exists a~partition of $[n]$ into sets $V_1$ and $V_2$ such that all but at most $\delta n^4 p_n$ edges have two points in each $V_i$.      
  \end{enumerate}
\end{thm}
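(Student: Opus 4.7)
The plan is to deduce Theorem~\ref{thm:F5F7-stability-Gnp} as a direct corollary of the main transference theorem (Theorem~\ref{thm:main}) applied to the known deterministic stability theorems for the $3$-book of $2$ pages, due to Keevash--Mubayi~\cite{KeMu}, and for the $4$-book of $3$ pages, due to F\"uredi--Pikhurko--Simonovits~\cite{FuPiSi}. This parallels how Theorem~\ref{thm:Fano-stability} follows from the Keevash--Sudakov / F\"uredi--Simonovits deterministic stability theorem for the Fano plane via the Conlon--Gowers transference machinery---except that the present two hypergraphs are not strictly $\ell$-balanced, so the Conlon--Gowers machinery only delivers the weaker threshold $n^{-1}(\log n)^C$.

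First, I would verify that the probability threshold $Cn^{-1}$ in the statement matches the $Cn^{-1/m_\ell(H)}$ threshold demanded by Theorem~\ref{thm:main}. For $F_5 = \{123, 124, 345\}$, the whole hypergraph has $\frac{e-1}{v-3} = \frac{2}{2} = 1$, while the proper subhypergraph $\{123, 124\}$ on four vertices also has $\frac{1}{1} = 1$, so $m_3(F_5) = 1$ and $F_5$ is not strictly $3$-balanced. Similarly, for $F_7 = \{1234, 1235, 1236, 4567\}$, the whole hypergraph has $\frac{3}{3} = 1$ and the proper subhypergraph $\{1234, 1235, 1236\}$ on six vertices has $\frac{2}{2} = 1$, so $m_4(F_7) = 1$ and $F_7$ is not strictly $4$-balanced. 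In both cases $1/m_\ell(H) = 1$, giving the claimed threshold.

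Second, I would translate the deterministic stability results into the form required by Theorem~\ref{thm:main}. The result of~\cite{KeMu} says that every $F_5$-free $3$-uniform hypergraph on $n$ vertices with at least $(\pi(F_5) - \eps)\binom{n}{3}$ edges, where $\pi(F_5) = 2/9$, admits a tripartition of its vertex set in which all but at most $\delta n^3$ edges meet each part exactly once; the result of~\cite{FuPiSi} gives the analogous statement with $\pi(F_7) = 3/8$ and a bipartition in which all but $\delta n^4$ edges have two points in each part. Feeding these into Theorem~\ref{thm:main} (with the families of `extremal' configurations taken to be, respectively, the complete tripartite $3$-uniform hypergraphs and the complete balanced bipartite $4$-uniform hypergraphs, and with the values $m_3(F_5) = m_4(F_7) = 1$) yields the two conclusions of Theorem~\ref{thm:F5F7-stability-Gnp}.

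The main obstacle is purely a matter of bookkeeping: one must check that the conclusions of~\cite{KeMu} and~\cite{FuPiSi} really are formulated, or can be reformulated, as stability with respect to the family of configurations demanded by the abstract hypotheses of Theorem~\ref{thm:main}. Since both deterministic theorems already phrase `almost extremal' hypergraphs as being within small edit distance of the appropriate partite structure, this translation should be routine. All of the probabilistic difficulty---transferring stability from the dense to the sparse random setting without assuming that $H$ is strictly $\ell$-balanced---is absorbed by Theorem~\ref{thm:main}, which is precisely the improvement over~\cite{CoGo} highlighted in the excerpt.
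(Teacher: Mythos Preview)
Your overall strategy matches the paper's: apply Theorem~\ref{thm:main} with $\bH$ encoding copies of $F_5$ (resp.\ $F_7$) in $K_n^{(3)}$ (resp.\ $K_n^{(4)}$), $\bB$ the family of complete tripartite $3$-graphs (resp.\ ``$(2,2)$-bipartite'' $4$-graphs), and $p_n = n^{-1}$. Your computation of $m_3(F_5) = m_4(F_7) = 1$ is correct and explains why these hypergraphs fall outside the scope of~\cite{CoGo}.

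However, there is a genuine gap where you write that translating the Keevash--Mubayi and F\"uredi--Pikhurko--Simonovits theorems into the $(\alpha,\bB)$-stability hypothesis of Theorem~\ref{thm:main} ``should be routine'' bookkeeping. It is not. Definition~\ref{dfn:stability} requires that every $U$ of density at least $\alpha - \eps$ that is $\delta$-far from every $B \in \B_n$ contain at least an $\eps$-fraction of \emph{all} copies of $H$, i.e., $\Omega(n^{v(H)})$ copies. The deterministic stability theorems only tell you that an $H$-\emph{free} hypergraph of that density is close to some $B$; the contrapositive gives merely \emph{one} copy of $H$, not many. Bridging ``not $H$-free'' to ``$\Omega(n^{v(H)})$ copies'' is precisely the content of the hypergraph removal lemma (Theorem~\ref{thm:hypergraph-removal-lemma}), and the paper explicitly lists it as one of the three ingredients needed. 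The argument runs exactly as in the paper's proof of Theorem~\ref{thm:stability-Gnp-new}: if $U$ had fewer than $\eps n^{v(H)}$ copies of $H$, delete $\delta' n^\ell$ edges to make it $H$-free, then apply deterministic stability to reach a contradiction with the assumption that $U$ is far from every $B$.

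You also omit the remaining hypotheses of Theorem~\ref{thm:main}: that $\bH$ is $(K,\bp)$-bounded (this is the nontrivial degree-moment calculation carried out in~\cite{Sc}), that $p_n^{e(H)}|H_n| \to \infty$, and that $|\B_n| = \exp(o(p_n|V(H_n)|))$. The last two are genuinely routine (here $|\B_n| \le 3^n$ or $2^n$ while $p_n|V(H_n)| = \Omega(n^{\ell-1})$), but they should be mentioned; the $(K,\bp)$-boundedness is not, and needs a reference to~\cite{Sc}.
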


We remark that, similarly as in Theorem~\ref{thm:stability-Gnp-new}, the~``asymptotically almost surely'' in the~statement of Theorem~\ref{thm:F5F7-stability-Gnp} can be replaced by ``with probability at least $1 - \exp(-\Omega(n^3p_n))$'' in (\ref{item:F5-Gnp}) and ``with probability at least $1 - \exp(-\Omega(n^4p_n))$'' in (\ref{item:F7-Gnp}).

\subsubsection{Sum-free sets}

\label{sec:new-sum-free-sets}

A~\emph{Schur triple} in an~Abelian group $G$ is any triple $(x,y,z) \in G^3$ that satisfies the~equation $x + y = z$. A~set $A \subseteq G$ is called \emph{sum-free} if $A^3$ contains no Schur triples or, in other words, if $(A+A) \cap A = \emptyset$. Here is an~important definition in the~study of sum-free sets: We say that a~finite Abelian group $G$ is of type~I if $|G|$ has a~prime divisor $q$ with $q \equiv 2 \pmod 3$ and it is of type~I($q$) if $q$ is the~smallest such prime. For a~set $B \subseteq G$, let $\mu(B)$ be the~density of the~largest sum-free subset of $B$ (so that this subset has $\mu(B)|B|$ elements). Diananda and Yap~\cite{DiYa} showed that $\mu(G) = \frac{1}{3} + \frac{1}{3q}$ for~all $G$ of type~I($q$) and characterised all sum-free subsets of $G$ with $\mu(G)|G|$ elements. The~results of Conlon and Gowers and Schacht yield the~following statement.

\begin{thm}[{\cite{CoGo,Sc}}]
  \label{thm:sum-free-density}
  Let $q$ be a~prime with $q \equiv 2 \pmod 3$ and let $(G_n)$ be a~sequence of type~I($q$) groups satisfying $|G_n| = n$. Then for every positive $\eps$, there exist positive constants $c$ and $C$ such that
  \[
  \lim_{n \to \infty} P\left( \mu((G_n)_{p_n}) \leq \left(\frac{1}{3} + \frac{1}{3q} + \eps\right) n p_n\right) =
  \begin{cases}
    1, & \text{if $p_n \geq Cn^{-1/2}$},\\
    0, & \text{if $p_n \leq cn^{-1/2}$}.
  \end{cases}
  \]
\end{thm}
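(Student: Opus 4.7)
The plan is to deduce both statements from the general transference theorem, Theorem~\ref{thm:main}, combined with the deterministic theorem of Diananda and Yap~\cite{DiYa}, which gives $\mu(G_n) = \frac{1}{3} + \frac{1}{3q}$ for every type~I($q$) group $G_n$ of order~$n$.

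For the $0$-statement, a first-moment deletion argument suffices. Since the equation $x+y=z$ has exactly $n^2$ solutions in $G_n^3$ (the pair $(x,y)$ determines $z$), the expected number of Schur triples with all three coordinates lying in $(G_n)_{p_n}$ is $n^2 p_n^3$, which is at most $c^2 n p_n$ when $p_n \leq c n^{-1/2}$. Combining Markov's inequality with a Chernoff bound on $|(G_n)_{p_n}|$, a.a.s.\ the set $(G_n)_{p_n}$ has $(1+o(1)) n p_n$ elements and contains fewer than $2c^2 n p_n$ Schur triples; deleting one coordinate from each triple leaves a sum-free subset of size at least $(1-3c^2) n p_n$, which exceeds $\bigl(\tfrac{1}{3}+\tfrac{1}{3q}+\eps\bigr) n p_n$ once $c = c(\eps)$ is small enough (for $\eps$ so large that $\tfrac{1}{3}+\tfrac{1}{3q}+\eps \geq 1$, the $0$-statement is vacuous).

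For the $1$-statement, I would apply Theorem~\ref{thm:main} to the $3$-uniform hypergraph $\mathcal{H}_n$ on vertex set $G_n$ whose edges are the (unordered) Schur triples. Two hypotheses must be verified. The first is a supersaturation bound: every $A \subseteq G_n$ with $|A| \geq (\mu(G_n)+\eps/2) n$ should contain $\Omega_\eps(n^2)$ Schur triples. This is a standard consequence of the Diananda-Yap theorem via a sampling/averaging argument and provides the deterministic input required by Theorem~\ref{thm:main}. The second hypothesis is the balance (or ``$2$-density'') condition: every pair $\{x,y\} \subseteq G_n$ lies in exactly one edge of~$\mathcal{H}_n$ (the element $x+y$ is the unique completion), so $\mathcal{H}_n$ has maximum pair-degree one and the natural transference threshold is precisely $p = n^{-1/2}$, matching the hypothesis $p_n \geq C n^{-1/2}$.

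The main obstacle is the supersaturation step. Although Diananda-Yap supplies the extremal count, passing to the more robust statement ``every slightly-larger-than-extremal subset of $G_n$ contains $\Omega_\eps(n^2)$ Schur triples'' requires care, since the bound must be quantitatively uniform in~$n$ (and ideally in~$q$). The cleanest route is a random sparsification argument: if $A \subseteq G_n$ had only $o(n^2)$ Schur triples, a uniformly random subset of $A$ of suitable density would with positive probability be Schur-triple-free and exceed the Diananda-Yap bound, a contradiction. Once supersaturation is in place, Theorem~\ref{thm:main} immediately yields the $1$-statement with the stronger probability bound $1 - \exp(-\Omega(n p_n))$ analogous to the one mentioned after Theorem~\ref{thm:F5F7-stability-Gnp}.
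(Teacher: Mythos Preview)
First, note that the paper does not actually supply a proof of Theorem~\ref{thm:sum-free-density}; the result is quoted from~\cite{CoGo,Sc}. The closest argument in the paper is the derivation of the stability version, Theorem~\ref{thm:stability-groups-random}, in Section~\ref{sec:proofs-new-results}, so the comparison below is with that argument and with the general framework.

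Your outline for the $0$-statement is fine. For the $1$-statement there is a genuine gap. The hypotheses you list --- supersaturation and a bound on pair-degrees --- are those of Schacht's density transference theorem~\cite[Theorem~3.3]{Sc}, not of Theorem~\ref{thm:main} of the present paper, which is the stability version and requires a family~$\bB$ and $(\alpha,\bB)$-stability. That mismatch is easily repaired by taking $\B_n=\{\emptyset\}$, whereupon $(\alpha,\bB)$-stability collapses to supersaturation. The real problem is your proposed proof of supersaturation. The random-sparsification argument you sketch only shows that a set $A$ of density $\mu(G)+\eps$ contains $\Omega_\eps(n)$ Schur triples, not $\Omega_\eps(n^2)$: in order to beat the Diananda--Yap bound after deletion you must retain a constant fraction of~$A$, so your deletion budget is $O(n)$, and hence the argument only contradicts the hypothesis ``$A$ has $o(n)$ triples''. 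Since $|H_n|=\Theta(n^2)$, the $\alpha$-density (equivalently, $(\alpha,\{\emptyset\})$-stability) hypothesis of Theorem~\ref{thm:main} demands $\Omega(n^2)$ triples, and your bound is too weak by a factor of~$n$.

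The route the paper takes for the corresponding ingredient in Theorem~\ref{thm:stability-groups-random} is different: it invokes Corollary~\ref{cor:group-stability}, whose proof in~\cite{BaMoSa} combines Green's removal lemma for abelian groups~\cite{Gr} with Theorem~\ref{thm:stability-groups}. For the bare density statement one would similarly use the group removal lemma: if $A$ had $o(n^2)$ Schur triples, removing $o(n)$ elements would make it sum-free, contradicting Diananda--Yap. Incidentally, your claim that each pair $\{x,y\}$ lies in exactly one Schur triple is not quite right --- the pair also lies in $\{x,y,y-x\}$ and $\{x,y,x-y\}$ --- but the conclusion that the pair-degree is $O(1)$, and hence that $\bH$ is $(K,\bp)$-bounded at $p_n=n^{-1/2}$, is correct.
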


It was proved by Green and Ruzsa~\cite{GrRu} that the~property of being sum-free in a~group of type~I exhibits very strong stability.

\begin{thm}[{\cite{GrRu}}]
  \label{thm:stability-groups}
  Let $G$ be an~Abelian group of type $I(q)$. If $A$ is a~sum-free subset of $G$ and
  \[
  |A| \ge \left( \mu(G) - \frac{1}{3q^2+3q} \right)|G|,
  \]
  then $A$ is contained in some sum-free set $A'$ of maximum size.
\end{thm}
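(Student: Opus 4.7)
The plan is to prove this via Fourier analysis on the finite Abelian group $G$, following the standard approach for sum-free stability problems. Let me first recall the structure of maximum sum-free sets in groups of type I($q$): by the theorem of Diananda and Yap, every sum-free subset of $G$ of size $\mu(G)|G| = \left(\tfrac{1}{3} + \tfrac{1}{3q}\right)|G|$ arises as $\phi^{-1}(M)$, where $\phi \colon G \to \mathbb{Z}/q\mathbb{Z}$ is a surjective homomorphism (available since $q \mid |G|$) and $M \subseteq \mathbb{Z}/q\mathbb{Z}$ is a translate of the ``middle third'' interval $\{(q+1)/3, \ldots, (2q-1)/3\}$, which is the unique (up to translation) sum-free subset of $\mathbb{Z}/q\mathbb{Z}$ of size $(q+1)/3$.

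Given $A \subseteq G$ sum-free with $|A| \geq (\mu(G) - \tfrac{1}{3q(q+1)})|G|$, I would consider $f = 1_A$ and its Fourier transform $\hat{f}(\chi) = \sum_{x \in G} f(x) \overline{\chi(x)}$ over the character group $\widehat{G}$. The sum-free condition is equivalent to the identity
\[
0 = \#\{(x,y,z) \in A^3 : x+y = z\} = \frac{1}{|G|} \sum_{\chi \in \widehat{G}} \hat{f}(\chi)^2 \overline{\hat{f}(\chi)}.
\]
Isolating the trivial character gives $|A|^3/|G| = -\frac{1}{|G|} \sum_{\chi \neq 1} \hat{f}(\chi)^2 \overline{\hat{f}(\chi)}$, and after combining with Parseval's identity $\sum_\chi |\hat{f}(\chi)|^2 = |G| \cdot |A|$, one derives that there must exist a nontrivial character $\chi_0$ with $|\hat{f}(\chi_0)|$ close to $|A|$. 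The numerology $\tfrac{1}{3} + \tfrac{1}{3q}$ on the density side forces $\chi_0$ to have order exactly $q$: for any character of order $d \neq q$, the maximum of $|\hat{f}(\chi)|$ for sum-free $f$ supported in a coset structure is bounded away from $\mu(G)|G|$ by more than $\tfrac{1}{3q(q+1)}|G|$, a quantitative check that relies on the density of the maximum sum-free set in $\mathbb{Z}/d\mathbb{Z}$ being strictly less than $\mu(G)$ when $d$ has no prime factor $\equiv 2 \pmod 3$ smaller than $q$, or being at most $\tfrac{1}{3} + \tfrac{1}{3d}$ otherwise.

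Once such $\chi_0$ of order $q$ is identified, factor it as $\chi_0 = \psi \circ \phi$ for a nontrivial $\psi \in \widehat{\mathbb{Z}/q\mathbb{Z}}$ and a surjective $\phi \colon G \to \mathbb{Z}/q\mathbb{Z}$. Let $B \subseteq \mathbb{Z}/q\mathbb{Z}$ be the ``projected density profile'' $B = \{t \in \mathbb{Z}/q\mathbb{Z} : |A \cap \phi^{-1}(t)| > 0\}$. The concentration of $\hat{f}$ on $\chi_0$ combined with Parseval on the fibres of $\phi$ forces $A$ to be essentially a union of entire fibres over a set $B' \subseteq B$ that is itself sum-free in $\mathbb{Z}/q\mathbb{Z}$. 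Since $|A| \geq (\mu(G) - \tfrac{1}{3q(q+1)})|G| = \tfrac{|G|}{|B'|_{\max}} \cdot \left(\tfrac{q+1}{3} - \tfrac{1}{q+1}\right)$ after a short computation, the deficit in $|A|$ relative to the maximum $(q+1)|G|/(3q)$ is smaller than one full fibre, which forces $B' \subseteq M$ for some translate $M$ of the middle third and further forces every fibre over $M$ to either be contained in $A$ or to contribute at most a small error.

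The main obstacle I expect is the last ``upgrading'' step, where one must pass from ``$A$ is mostly contained in $\phi^{-1}(M)$'' to ``$A$ is exactly contained in $\phi^{-1}(M)$.'' Here one uses that any single element $x \in A \setminus \phi^{-1}(M)$ would create a Schur triple with the very dense remainder $A \cap \phi^{-1}(M)$, contradicting sum-freeness; this is the stage where the precise constant $\tfrac{1}{3q(q+1)}$ is crucial, because it is the largest deficit for which the remainder in $\phi^{-1}(M)$ is still dense enough on each relevant fibre to guarantee a Schur triple with any stray point. With that step completed, $A \subseteq \phi^{-1}(M)$, and since $\phi^{-1}(M)$ is itself a maximum sum-free set, we may take $A' = \phi^{-1}(M)$, which proves the theorem.
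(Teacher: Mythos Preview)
The paper does not prove this theorem. Theorem~\ref{thm:stability-groups} is quoted from Green and Ruzsa~\cite{GrRu} and used only as a black box (via Corollary~\ref{cor:group-stability}) in the derivation of Theorem~\ref{thm:stability-groups-random}; there is no ``paper's own proof'' to compare against.

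For what it is worth, your outline is indeed the shape of the Green--Ruzsa argument: find a large nontrivial Fourier coefficient, show it must have order~$q$, project along the induced homomorphism to $\mathbb{Z}/q\mathbb{Z}$, and then upgrade approximate containment in a maximum sum-free set to exact containment. One correction to your sketch: the maximum sum-free subsets of $\mathbb{Z}/q\mathbb{Z}$ are not \emph{translates} of the middle-third interval (translating a sum-free set typically destroys sum-freeness) but \emph{dilates} $\lambda \cdot \{(q+1)/3, \ldots, (2q-1)/3\}$ for $\lambda \in (\mathbb{Z}/q\mathbb{Z})^\times$. This does not break your plan, since the dilation can be absorbed into the choice of $\phi$, but the wording is wrong as stated. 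The step ruling out characters of order $d \neq q$ also needs more than you indicate: it requires the full Diananda--Yap determination of $\mu(\mathbb{Z}/d\mathbb{Z})$ for all $d$ and a careful comparison with $\mu(G)$, not just a one-line density check.
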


As a~last application of our main result, we will give a~much more transparent proof of the~following sparse random analogue of Theorem~\ref{thm:stability-groups}, originally derived from the~transference theorem of Conlon and Gowers~\cite{CoGo} by Balogh, Morris, and Samotij~\cite{BaMoSa}, with an~improved probability estimate.

\begin{thm}[{\cite{BaMoSa, CoGo}}]
  \label{thm:stability-groups-random}
  Let $q$ be a~prime with $q \equiv 2 \pmod 3$ and let $(G_n)$ be a~sequence of type~I($q$) groups satisfying $|G_n| = n$. Then for every positive $\delta$, there exist positive constants $\eps$ and $C$ such that with probability at least $1 - \exp(-\Omega(np_n))$, for every sum-free subset $A \subseteq (G_n)_{p_n}$ with at least $(\mu(G_n) - \eps)np_n$ elements, there exists a~sum-free set $A' \subseteq G_n$ of maximum size such that $|A \setminus A'| \leq \delta n p_n$.
\end{thm}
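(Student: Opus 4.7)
The plan is to derive Theorem~\ref{thm:stability-groups-random} as a direct corollary of the main transference theorem (Theorem~\ref{thm:main}), feeding in the Green--Ruzsa stability theorem (Theorem~\ref{thm:stability-groups}) as the deterministic input. The idea is to encode sum-freeness as a hereditary property with respect to a $3$-uniform hypergraph: let $\mathcal{H}_n$ be the hypergraph on vertex set $G_n$ whose edges are the Schur triples $\{x,y,x+y\}$, so that $A\subseteq G_n$ is sum-free if and only if $A$ contains no edge of $\mathcal{H}_n$. The threshold $p_n \asymp n^{-1/2}$ falls out naturally: each element of $G_n$ lies in $\Theta(n)$ Schur triples, which makes $\mathcal{H}_n$ essentially $n$-regular on $n$ vertices with $\Theta(n^2)$ edges, and the relevant density parameter (playing the role of $m_\ell(H)$ in the hypergraph-Tur\'an setting) evaluates to $2$.

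The substantive work is verifying the hypotheses of Theorem~\ref{thm:main}. Aside from the routine regularity/boundedness conditions for $\mathcal{H}_n$ and the extremal density $\mu(G_n)=\frac13+\frac{1}{3q}$, there are two essential inputs. The first is the \emph{deterministic stability}: Theorem~\ref{thm:stability-groups} gives something strictly stronger than needed, namely that every sum-free $B\subseteq G_n$ with $|B|\ge\bigl(\mu(G_n)-\tfrac{1}{3q^2+3q}\bigr)|G_n|$ is \emph{contained in} some maximum sum-free set $A'\subseteq G_n$; in particular, for any $\delta'>0$ one may choose $\varepsilon'>0$ so that $|B|\ge(\mu(G_n)-\varepsilon')n$ forces $|B\setminus A'|\le\delta' n$ for some maximum sum-free $A'$. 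The second is a \emph{supersaturation} statement of the form required by Theorem~\ref{thm:main}: if $B\subseteq G_n$ has $|B|\ge(\mu(G_n)-\varepsilon')n$ and is $\delta'$-far from every maximum sum-free subset of $G_n$, then $B$ contains $\Omega(n^2)$ Schur triples. This is a standard consequence of the deterministic stability via an averaging/removal argument, and can also be proved directly using the explicit description of the maximum sum-free subsets of a type-I($q$) group as (roughly) unions of cosets of the index-$q$ subgroup determined by specific residues mod~$q$.

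With these two ingredients in hand, Theorem~\ref{thm:main} outputs, for every $\delta>0$, constants $\varepsilon,C>0$ such that if $p_n\ge Cn^{-1/2}$ then with probability $1-\exp(-\Omega(np_n))$ every sum-free $A\subseteq(G_n)_{p_n}$ with $|A|\ge(\mu(G_n)-\varepsilon)np_n$ is within $\delta np_n$ elements of some maximum sum-free $A'\subseteq G_n$, which is exactly the conclusion of Theorem~\ref{thm:stability-groups-random}. The whole derivation is short because the heavy lifting (the multiple-exposure machinery and the probabilistic estimates) has been done once and for all in Theorem~\ref{thm:main}.

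The main obstacle I expect is the precise calibration of the supersaturation hypothesis to the form demanded by Theorem~\ref{thm:main}: one needs a linear-in-$n^2$ count of Schur triples in $B$ (not merely $\omega(n)$), with constants depending only on the distance $\delta'$ from extremality, and uniform over the sequence $(G_n)$. The naive application of Green--Ruzsa combined with a graph-removal-type argument gives only $o(n^2)$, so some care is needed—either by sharpening the removal step using the very explicit extremal structure in type-I($q$) groups, or by invoking an arithmetic removal lemma quantitatively enough to recover the required linear count. Once this is done cleanly, the rest of the argument is essentially a verification of hypotheses.
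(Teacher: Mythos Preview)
Your approach is correct and matches the paper's proof. The obstacle you anticipate is not real: Green's removal lemma for Abelian groups combined with Theorem~\ref{thm:stability-groups} gives exactly the linear supersaturation you need---the paper packages this as Corollary~\ref{cor:group-stability}, which states that any $A\subseteq G$ with $|A|\ge(\mu(G)-\eps)|G|$ either contains at least $\eps^3|G|^2/27$ Schur triples or satisfies $|A\setminus A'|\le\eps|G|$ for some maximum sum-free $A'$. The one hypothesis of Theorem~\ref{thm:main} you pass over is the bound $|\B_n|=\exp(o(p_n|V(H_n)|))$; the paper handles this by noting that a type~I($q$) group of order $n$ has at most $n$ maximum sum-free sets.
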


\subsection{Notation}

\label{sec:notation}

Given a~(hyper)graph $H$, we denote its vertex and edge sets by $V(H)$ and $E(H)$, and the~cardinalities of these two sets by $v(H)$ and $e(H)$, respectively. As one often identifies the~hypergraph $H$ with its edge set $E(H)$, sometimes instead of $e(H)$ or $|E(H)|$, we will simply write $|H|$. For a~set $U \subseteq V(H)$, we write $H[U]$ to denote the~subhypergraph of $H$ induced by $U$, i.e., the~hypergraph on the~vertex set $U$ whose edges are all the~edges of $H$ that are fully contained in $U$. Given a~vertex $v \in V(H)$, we let $\deg(v,U)$ denote the~degree of $v$ in $H[U]$, i.e., the~number of edges of $H[U]$ that contain $v$. Finally, we will denote by $[n]$ the~set $\{1, \ldots, n\}$ of the~first $n$ positive integers.

Since throughout the~paper, we will deal with many sequences indexed by (subsets of) the~natural numbers, in order to unclutter the~notation and, hopefully, improve readability, we use the~(somewhat informal) notational convention that the~sequences are denoted by boldface letters, e.g., $\bp$ stands for $(p_n)$, that is, the~sequence $p \colon \N \to [0,1]$ indexed by the~set of natural numbers\footnote{Since in order to aid readability, in the~proofs of our theorems we will often drop the~subscript $n$, we need a~way to distinguish between the~$n$th element of the~sequence, abbreviated by $p$, and the~sequence $p$ itself.}. the~only exception is that, due to typesetting limitations, the~sequence $(\B_n)$ will be denoted by $\bB$.

\subsection{Outline}

\label{sec:outline}

The remainder of this paper is organised as follows. In Section~\ref{sec:preliminaries}, we state a~few auxiliary results that will be used in our proofs. In Section~\ref{sec:main-results}, we state the~main result of this paper, Theorem~\ref{thm:main}, which we then prove in Section~\ref{sec:proof-lemma-main}. Finally, in Section~\ref{sec:proofs-new-results}, we use Theorem~\ref{thm:main} to deduce Theorems~\ref{thm:stability-Gnp-new}, \ref{thm:F5F7-stability-Gnp} and~\ref{thm:stability-groups-random}.

\section{Preliminaries}

\label{sec:preliminaries}

\subsection{Bounding large deviations}

\label{sec:bound-large-dev}

In the~proof of Lemma~\ref{lemma:main}, we will often use the~following standard estimates for tail probabilities of the~binomial distribution, see, e.g., \cite[Appendix A]{AlSp}.

\begin{lemma}[Chernoff's inequality]
  \label{lemma:Chernoff}
  Let $n$ be a~positive integer, let $p \in [0,1]$ and let $X \sim \Bin(n,p)$. For every positive $a$,
  \[
  P(X < np - a) < \exp\left(-\frac{a^2}{2np}\right) \quad \text{and} \quad P(X > np + a) < \exp\left(-\frac{a^2}{2np} + \frac{a^3}{2(np)^2}\right)
  \]
  In particular, if $a \leq np/2$, then
  \[
  P(X > np + a) < \exp\left(-\frac{a^2}{4np}\right).
  \]
\end{lemma}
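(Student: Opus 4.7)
The plan is to invoke the classical Chernoff (exponential-moment) method. For any $t \in \mathbb{R}$, Markov's inequality applied to $e^{tX}$ gives, for $t > 0$, $P(X \geq np + a) \leq e^{-t(np + a)} \Ex[e^{tX}]$, and symmetrically for $t < 0$ and the lower tail. A direct computation yields $\Ex[e^{tX}] = (1 - p + pe^t)^n$, and the elementary bound $1 + y \leq e^y$ applied with $y = p(e^t - 1)$ gives $\Ex[e^{tX}] \leq \exp\bigl(np(e^t - 1)\bigr)$.

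For the lower tail, I would substitute $t = \log(1 - a/(np)) < 0$, the minimizer of the resulting exponent, to obtain
\[
P(X \leq np - a) \leq \exp\bigl(-np \cdot g(a/(np))\bigr), \qquad \text{where } g(x) = (1-x)\log(1-x) + x.
\]
Since $g(0) = g'(0) = 0$ and $g''(x) = 1/(1-x) \geq 1$ for $x \in [0,1)$, Taylor's theorem gives $g(x) \geq x^2/2$ on $[0,1)$, yielding the claimed $\exp(-a^2/(2np))$. For the upper tail, the analogous choice $t = \log(1 + a/(np)) > 0$ produces $P(X \geq np + a) \leq \exp\bigl(-np \cdot f(a/(np))\bigr)$ with $f(x) = (1+x)\log(1+x) - x$. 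The remaining analytic step is to verify $f(x) \geq x^2/2 - x^3/2$ for $x \geq 0$: writing $f(x) = \int_0^x (x - u)/(1 + u)\,du$ (using $f(0) = f'(0) = 0$ and $f''(u) = 1/(1+u)$) and noting that $1/(1+u) \geq 1 - u$ on $[0,\infty)$, since $1 - (1-u)(1+u) = u^2 \geq 0$, one obtains after integration $f(x) \geq x^2/2 - x^3/6$, which is certainly at least $x^2/2 - x^3/2$. Substituting $x = a/(np)$ delivers the stated bound.

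The strengthened form under the assumption $a \leq np/2$ is then immediate: in that regime $a^3/(2(np)^2) \leq a^2/(4np)$, so the cubic correction is absorbed into the quadratic term and one obtains $\exp(-a^2/(4np))$. I expect no genuine obstacle here — the whole argument is entirely classical; the only care required is in the choice of the optimizing parameter $t$ and in the elementary Taylor estimates for $f$ and $g$. The statement is quoted from Alon and Spencer precisely because it is a standard textbook result, so a reader comfortable with the Chernoff method can reconstruct the proof from the above sketch in a few lines.
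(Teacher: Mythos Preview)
Your argument is correct and is exactly the classical exponential-moment derivation; all the analytic steps (the Taylor estimates for $g$ and $f$, and the absorption of the cubic term when $a \le np/2$) check out. Note, however, that the paper does not actually give its own proof of this lemma: it is stated as a standard tool and attributed to \cite[Appendix~A]{AlSp}, so there is nothing to compare against beyond observing that your proof is the textbook one the citation points to.
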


We will also need the~following approximate concentration result for $(K,\bp)$-bounded hypergraphs. the~definition of $(K,\bp)$-boundedness and $\deg_i$ are given in Section~\ref{sec:main-results}, in Definition~\ref{dfn:boundedness} and in~\eqref{eq:deg-i}, respectively.

\begin{lemma}[{\cite{RoRu,Sc}}]
  \label{lemma:upper-tail}
  Let $\bp$ be a~sequence of probabilities, let $K$ be a~positive constant, and suppose that $\bH$ is a~sequence of $k$-uniform hypergraphs that is $(K,\bp)$-bounded and satisfies $|V(H_n)| \to \infty$ as $n \to \infty$. Then for every $i \in \{0, \ldots, k-1\}$ and every positive $\eta$, there exist positive $b$ and $N$ such that for~every $n$ with $n \geq N$ and every $q \in [0,1]$ with $q \geq p_n$, with probability at least $1 - \exp(-bq|V(H_n)|)$, there exists a~set $X \subseteq V(H_n)_q$ with $|X| \leq \eta q |V(H_n)|$ such that
  \[
  \sum_{v \in V(H_n)} \deg_i^2(v, V(H_n)_q \setminus X) \leq 4^kk^2Kq^{2i} \frac{|H_n|^2}{|V(H_n)|}.
  \]
\end{lemma}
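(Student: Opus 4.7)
\noindent Let $N := |V(H_n)|$ and $m := |H_n|$, and write $U := V(H_n)_q$. The proof is built around the following deletion trick. For a threshold $T$ to be chosen below, take
\[
X := \bigl\{ v \in U : \deg_i(v, U) \geq T \bigr\}.
\]
Every vertex $v \notin X$ then satisfies $\deg_i(v, U\setminus X) \leq \deg_i(v, U) < T$. Assuming the natural convention under which $\deg_i(v, U) = 0$ for $v$ outside $U$ (so that vertices of $V(H_n) \setminus U$ contribute nothing), truncation gives
\[
\sum_{v \in V(H_n)} \deg_i^2(v, U\setminus X) \;\leq\; T \cdot \sum_{v \in V(H_n)} \deg_i(v, U).
\]
It therefore suffices to prove, each with failure probability at most $\tfrac12 \exp(-bqN)$, that $|X| \leq \eta qN$ and that $\sum_v \deg_i(v, U) \leq C K q^i m$ for a suitable $C = C(k)$; tuning $T$ then forces the right-hand side of the previous display to be at most $4^k k^2 K q^{2i} m^2/N$.

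\medskip

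\noindent The in-expectation inputs come directly from the hypothesis that $\bH$ is $(K, \bp)$-bounded. Since $\deg_i(v,U)$ counts $i$-element sub-configurations of $H_n$ incident with $v$ that are forced to lie in $U$, unpacking its definition gives
\[
\Ex\!\Bigl[\sum_v \deg_i(v,U)\Bigr] \leq C_1 K q^i m \quad\text{and}\quad \Ex\!\Bigl[\sum_v \deg_i^2(v,U)\Bigr] \leq C_2 K q^{2i} \frac{m^2}{N},
\]
for every $q \geq p_n$, with $C_1, C_2$ depending only on $k$. Choosing $T$ of order $q^{i-1/2} m/N$ (with the implicit constant absorbing $\eta$, $K$ and $k$), Markov applied to $T^2 |X| \leq \sum_v \deg_i^2(v,U)$ gives $\Ex|X| \leq \tfrac12 \eta qN$, while $T \cdot C_1 K q^i m$ fits inside $4^k k^2 K q^{2i}m^2/N$ after adjusting constants. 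This proves the lemma in expectation, i.e.\ with probability at least $\tfrac12$.

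\medskip

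\noindent Upgrading these in-expectation bounds to the announced exponential tails is the main obstacle. The sum $\sum_v \deg_i^2(v,U)$ is a polynomial of degree $2i$ in the independent vertex indicators, and its upper tail is notoriously hard to control directly; Chebyshev or a naive Chernoff yields at best polynomial decay. The deletion trick finesses this by replacing it with concentration of the two tamer functionals $|X|$ and $\sum_v \deg_i(v,U)$. For the latter -- essentially a weighted count of $i$-subsets of $V(H_n)$ contained in $U$ -- Lemma~\ref{lemma:Chernoff}, applied via a two-round exposure $U = V_{q_1} \cup V_{q_2}$ with $q_1+q_2 \sim q$, delivers the required $\exp(-bqN)$ tail (the assumption $q \geq p_n$ ensures the mean dominates $qN$). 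For $|X|$, one first exposes $V_{q_1}$: $(K,\bp)$-boundedness forces $\Ex[\deg_i(v,U) \mid V_{q_1}]$ to be well below $T$ for all but an $\eta qN/4$ fraction of vertices, after which the event $v \in X$ becomes a monotone event in the independent Bernoullis of the second round, and a final application of Lemma~\ref{lemma:Chernoff} closes the gap.
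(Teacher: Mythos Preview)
First, note that the paper does not supply its own proof of this lemma; it is quoted from \cite{RoRu,Sc}, with only the trivial extension to $i=0$ remarked upon. So there is no in-paper argument to compare against, and what follows is an assessment of your proposal on its own terms.

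Your argument has a genuine gap at the truncation step. The ``natural convention'' that $\deg_i(v,U)=0$ for $v\notin U$ is \emph{not} the paper's definition: by~\eqref{eq:deg-i}, $\deg_i(v,U)$ counts edges through $v$ with at least $i$ other endpoints in $U$, irrespective of whether $v$ itself lies in $U$. The sum in the lemma runs over all $v\in V(H_n)$, and this is exactly how it is used later (see~\eqref{eq:U'-L2-norm}, where the sum is over $u\in U'$ while the random set is $U'_{q_s}\subseteq U'$). Under the correct definition your deletion $X=\{v\in U:\deg_i(v,U)\ge T\}$ controls nothing for vertices $v\in V(H_n)\setminus U$, and even for $v\in X$ the quantity $\deg_i(v,U\setminus X)$ need not vanish --- it still counts edges through $v$ with $\ge i$ endpoints in $U\setminus X$. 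Hence the inequality $\sum_v \deg_i^2(v,U\setminus X)\le T\sum_v \deg_i(v,U)$ is unjustified. The deletion arguments in \cite{RoRu,Sc} must handle all vertices simultaneously, which requires choosing $X$ with a different criterion than ``$v$ itself has high $\deg_i$''.

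Separately, the concentration sketch in your final paragraph is too thin to be a proof. The functional $\sum_v\deg_i(v,U)$ is a degree-$i$ polynomial in the vertex indicators, and Lemma~\ref{lemma:Chernoff} --- which is Chernoff for a single binomial --- does not apply to it; saying ``two-round exposure'' without specifying how it linearises the problem leaves the hardest step unargued. The same applies to your bound on $|X|$, which asks for an exponential upper tail on the number of high-degree vertices, again a nonlinear statistic. The cited references carry out these estimates with substantially more care, and that work cannot be collapsed into a one-line appeal to Chernoff.
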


We remark that~\cite{RoRu,Sc} stated Lemma~\ref{lemma:upper-tail} with the~assumption $i \in \{1, \ldots, k-1\}$; in the~case $i = 0$, the~assertion of the~lemma holds trivially (one can take $X = \emptyset$) with probability $1$.

\subsection{Stability theorems and removal lemmas}

\label{sec:stab-theor-remov}

The proof of Theorem~\ref{thm:stability-Gnp-new} will rely on the~following classical result known as the~\emph{graph removal lemma}, originally proved in the~case $H = K_3$ by Ruzsa and Szemer{\'e}di~\cite{RuSz}.

\begin{thm}
  \label{thm:graph-removal-lemma}
  For an~arbitrary graph $H$ and any positive constant $\delta$, there exists a~positive constant~$\eps$ such that every graph on $n$ vertices with at most $\eps n^{v(H)}$ copies of $H$ can be made $H$-free by removing from it at most $\delta n^2$ edges.
\end{thm}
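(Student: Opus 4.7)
The plan is to invoke Szemer{\'e}di's regularity lemma together with a standard counting (embedding) lemma for regular pairs to establish the contrapositive: any $n$-vertex graph $G$ that cannot be made $H$-free by deleting at most $\delta n^2$ edges must contain at least $\eps n^{v(H)}$ copies of $H$ for some $\eps = \eps(H, \delta) > 0$.

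First, I would apply the regularity lemma to $G$ with parameters $\eps'$ (to be fixed later in terms of $\delta$ and $v(H)$) and $t_0 = \lceil 1/\delta \rceil$, obtaining a partition $V(G) = V_1 \cup \cdots \cup V_t$ into parts of nearly equal size with $t_0 \le t \le T_0(\eps', t_0)$, such that all but at most $\eps' \binom{t}{2}$ pairs $(V_i, V_j)$ are $\eps'$-regular. Then I would \emph{clean} $G$ by deleting three classes of edges: (a) edges inside the parts $V_i$, numbering at most $t \cdot (n/t)^2 \le n^2/t_0 \le \delta n^2/3$; (b) edges across $\eps'$-irregular pairs, numbering at most $\eps' n^2 \le \delta n^2/3$; and (c) edges across pairs of density less than some threshold $d > 0$, numbering at most $d n^2 \le \delta n^2/3$. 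Choosing $\eps', d$ sufficiently small in terms of $\delta$, the total number of deleted edges is at most $\delta n^2$. Let $G'$ denote the cleaned graph.

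Now, if $G$ cannot be made $H$-free by removing $\delta n^2$ edges, then $G'$ still contains a copy of $H$. Such a copy maps $V(H)$ to parts $V_{i_1}, \ldots, V_{i_{v(H)}}$ (not necessarily distinct, but since edges inside parts have been removed, every edge of the copy crosses between two distinct parts), and for each edge of $H$ the corresponding pair is $\eps'$-regular of density at least $d$. The counting lemma for regular partitions then yields at least $c (n/t)^{v(H)}$ copies of $H$ in $G'$ that follow the same pattern of parts, for some $c = c(d, \eps', v(H)) > 0$, provided $\eps'$ was chosen small enough relative to $d$ and $v(H)$. Since $t \le T_0(\eps', t_0)$ is bounded in terms of $H$ and $\delta$ only, this gives $c' n^{v(H)}$ copies of $H$ in $G'$, hence in $G$, for some $c' = c'(H, \delta) > 0$. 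Setting $\eps := c'$ closes the contrapositive.

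The main obstacle is Step~3, namely invoking the counting lemma for embeddings of a fixed graph $H$ into a cluster graph of $\eps'$-regular pairs with densities bounded away from zero. This is a well-known but technically delicate result, proved by induction on $v(H)$: one successively extends a partial embedding vertex by vertex, using $\eps'$-regularity to show that the set of "typical" vertices in each relevant part (those having codegree approximately $d^{\deg(v)} (n/t)^{\deg(v)}$ into the already-embedded vertices) has size at least $(1 - o_{\eps'}(1))(n/t)$. Fixing the chain $d \gg \eps' \gg 1/t_0$ of quantifiers so that this induction goes through is the only nontrivial quantitative point of the argument; once it is in place, the rest of the proof is bookkeeping.
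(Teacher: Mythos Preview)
The paper does not prove this theorem. It is stated in Section~\ref{sec:stab-theor-remov} as a known preliminary result, with a citation to Ruzsa and Szemer{\'e}di for the triangle case, and is then used as a black box in the derivation of Theorem~\ref{thm:stability-Gnp-new}. Your sketch via the regularity lemma plus a counting/embedding lemma is the standard proof and is essentially correct; the only slip is in the cleaning step, where with $t_0 = \lceil 1/\delta \rceil$ the bound on intra-part edges is $n^2/t_0 \le \delta n^2$, not $\delta n^2/3$, so you should take $t_0 = \lceil 3/\delta \rceil$ (or adjust the other thresholds) to make the three contributions sum to at most $\delta n^2$.
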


The proof of Theorem~\ref{thm:F5F7-stability-Gnp} will rely on two aforementioned stability results for the~book hypergraphs and a~version of Theorem~\ref{thm:graph-removal-lemma} for hypergraphs.

\begin{thm}[\cite{FuPiSi,KeMu}]
  \label{thm:F5F7-stability}
  For every positive constant $\delta$, there exists a~positive constant $\eps$ such that the~following holds:
  \begin{enumerate}[(i)]
  \item
    \label{item:F5}
    For every $3$-uniform hypergraph with at least $\left(\frac{2}{9} - \eps\right)\binom{n}{3}$ edges that does not contain the~$3$-book of $2$ pages, there exists a~partition of $[n]$ into sets $V_1$, $V_2$, and $V_3$ such that all but at most $\delta n^3$ edges have one point in each $V_i$.
    \item
      \label{item:F7}
    For every $4$-uniform hypergraph with at least $\left(\frac{3}{8} - \eps\right)\binom{n}{4}$ edges that does not contain the~$4$-book of $3$ pages, there exists a~partition of $[n]$ into sets $V_1$ and $V_2$ such that all but at most $\delta n^4$ edges have two points in each $V_i$.      
 \end{enumerate}
\end{thm}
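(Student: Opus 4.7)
The final statement, Theorem~\ref{thm:F5F7-stability}, is quoted from~\cite{KeMu,FuPiSi} and taken as a~black box in the~present paper. I will nonetheless sketch how I would attempt to prove it from scratch. Both parts follow the same high-level template: combine the exact Tur{\'a}n density (which is $2/9$ for the $3$-book and $3/8$ for the $4$-book) with a structural analysis of link objects to force a near-extremal hypergraph to look multipartite.

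For part~(i), I would fix a $B_3^{(2)}$-free $3$-uniform hypergraph $H$ on $[n]$ with $e(H) \geq (2/9 - \eps)\binom{n}{3}$ and, for each $v \in [n]$, consider the link graph $L(v) = \{uw : \{u,v,w\} \in E(H)\}$. The $B_3^{(2)}$-free condition translates into the local constraint that whenever two edges $\{u,v,w\}$ and $\{u,v,w'\}$ share a pair $\{u,v\}$ (so $u$ is adjacent in $L(v)$ to both $w$ and $w'$), the pair $\{w,w'\}$ cannot lie in any third edge of $H$. In a near-extremal $H$ almost every pair has codegree $\Omega(n)$, so this constraint quickly rules out triangles (and more generally dense subgraphs) in the typical link. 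An averaging argument combined with Erd{\H o}s-Simonovits stability for $K_3$-free graphs (Theorem~\ref{thm:stability} with $H = K_3$) then forces the typical $L(v)$ to be close to complete bipartite. I would stitch the resulting bipartitions of $[n] \setminus \{v\}$ over many good $v$ into a single tripartition of $[n]$, absorbing the incompatible vertices into an exceptional set of size $o(n)$, which contributes at most $\delta n^3$ bad edges.

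Part~(ii) would proceed along the same lines, with link graphs replaced by \emph{link pairs} $L(uv) = \{ww' : \{u,v,w,w'\} \in E(H)\}$, viewed as graphs on $[n] \setminus \{u,v\}$, and with a stability statement for the near-extremal complete bipartite $4$-uniform hypergraph (with each edge split $2{+}2$) playing the role of Theorem~\ref{thm:stability}. The main obstacle in both parts is quantitative: the inputs to the gluing step are only approximate stability statements, so the dependence $\eps = \eps(\delta)$ must survive a chain of ``$\eps$-close then $\delta$-close'' reductions without blowing up, and the gluing itself must be robust against $o(n)$ atypical vertices. Controlling this propagation is precisely what makes the published proofs in~\cite{KeMu,FuPiSi} significantly more intricate than this three-paragraph sketch indicates; in particular, the $4$-uniform case of~\cite{FuPiSi} requires genuinely new ideas to handle the fact that the extremal structure is bipartite with a $2{+}2$ edge-split rather than properly multipartite.
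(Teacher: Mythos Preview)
You correctly identify that the paper does not prove Theorem~\ref{thm:F5F7-stability}; it is quoted from~\cite{KeMu,FuPiSi} as a preliminary result and used as a black box in the derivation of Theorem~\ref{thm:F5F7-stability-Gnp}. There is therefore no proof in the paper to compare your sketch against, and your opening sentence is exactly the right answer.

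As for the sketch itself: the link-graph approach you outline for part~(\ref{item:F5}) is indeed the backbone of the Keevash--Mubayi argument, and your translation of the $B_3^{(2)}$-freeness into a local constraint on links is accurate. Your caveat that the gluing of local bipartitions into a global tripartition, and the control of the $\eps$--$\delta$ dependencies, is where the real work lies is well taken; in particular, for part~(\ref{item:F7}) the link-pair heuristic is only a starting point, and the actual argument in~\cite{FuPiSi} is substantially more delicate than a direct analogue of the $3$-uniform case. None of this affects the present paper, which only needs the statement.
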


\begin{thm}[{\cite{Go,NaRoSc,Ta}}]
  \label{thm:hypergraph-removal-lemma}
  For an~arbitrary $k$-uniform hypergraph $H$ and any positive constant $\delta$, there exists a~positive constant $\eps$ such that every $k$-uniform hypergraph on $n$ vertices with at most $\eps n^{v(H)}$ copies of $H$ may be made $H$-free by removing from it at most $\delta n^k$ edges.
\end{thm}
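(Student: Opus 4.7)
The plan is to follow the classical ``regularity plus counting'' paradigm, which is how the hypergraph removal lemma was originally established. At a high level, one applies a strong form of the hypergraph regularity lemma to partition the vertex set (and all the intermediate $j$-tuple structures for $1 \leq j \leq k-1$) into a bounded number of parts so that the given hypergraph looks ``quasirandom'' with respect to this partition, then cleans up the hypergraph by removing every edge that sits in an irregular configuration, in a low-density block, or in a block whose underlying $(k-1)$-uniform scaffolding is too sparse. A counting lemma is then invoked to show that if any copy of $H$ survives in the cleaned hypergraph, then in fact many copies of $H$ do.

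More concretely, I would first fix $\delta > 0$ and $H$, and choose parameters $\eps(H,\delta)$, a regularity parameter $\epsilon_{\mathrm{reg}}$, and a density threshold $d > 0$ with $d \ll \delta$, all sufficiently small in terms of each other, in the usual hierarchy. Applying the hypergraph regularity lemma of Gowers or R\"odl--Nagle--Schacht--Skokan (in the form used in \cite{Go,NaRoSc,Ta}) produces a regular partition of the vertex set of the input hypergraph $G$, together with regular $j$-uniform partitions for $j = 2, \ldots, k-1$, with complexity bounded in terms of $\epsilon_{\mathrm{reg}}$ only. I then delete from $G$ every edge that either (a) lies in a $k$-partite block that is not $\epsilon_{\mathrm{reg}}$-regular, (b) lies in a block of density less than $d$, or (c) whose projections to the lower $j$-uniform partitions are not regular or are too sparse. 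A standard counting argument shows that the total number of edges removed is at most $\delta n^k$, provided the parameters were chosen appropriately.

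Let $G'$ be the resulting cleaned hypergraph. The heart of the argument is the hypergraph counting lemma: if $G'$ contains even a single copy of $H$, then $G'$ must contain at least $c \cdot n^{v(H)}$ copies of $H$, where $c = c(H, d, \epsilon_{\mathrm{reg}}, \text{complexity bound}) > 0$. Setting $\eps < c$ then forces $G'$ to be $H$-free whenever $G$ has at most $\eps n^{v(H)}$ copies of $H$, since otherwise $G'$ would have $\geq c n^{v(H)} > \eps n^{v(H)}$ copies, and these are also copies in $G$, a contradiction. Thus $G$ can be made $H$-free by removing at most $\delta n^k$ edges, namely those deleted in the cleaning step.

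The main obstacle is of course the hypergraph regularity framework itself: unlike the graph case ($k=2$), where Szemer\'edi's regularity lemma suffices and the counting lemma is elementary, for $k \geq 3$ one needs a regular partition not just of vertices but of all $j$-uniform configurations for $j < k$, and the counting lemma has to be proved by a delicate induction on $k$ that tracks how copies of $H$ extend vertex by vertex through these nested partitions. Making the density parameter $d$, the regularity parameter, and the partition complexity cohere so that the cleanup removes only $\delta n^k$ edges while preserving a lower bound of $c n^{v(H)}$ on the count is the technically demanding part; this was the central difficulty resolved in \cite{Go,NaRoSc,Ta}.
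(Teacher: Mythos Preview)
The paper does not give its own proof of this statement: Theorem~\ref{thm:hypergraph-removal-lemma} is stated in the preliminaries section as a known result, with attribution to \cite{Go,NaRoSc,Ta}, and is simply invoked later. So there is no proof in the paper to compare your proposal against.

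That said, your sketch is an accurate high-level description of how the cited works establish the hypergraph removal lemma: apply a strong hypergraph regularity lemma (regularising not just vertices but all lower-uniformity structures), clean out irregular and low-density cells at a cost of at most $\delta n^k$ edges, and then use the hypergraph counting lemma to conclude that any surviving copy of $H$ forces $\Omega(n^{v(H)})$ copies. The caveat you flag---that the regularity and counting framework for $k\geq 3$ is genuinely deep and is the real content of \cite{Go,NaRoSc,Ta}---is exactly right, and your outline should not be read as a self-contained proof but rather as a roadmap through those papers.
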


Finally, the~proof of Theorem~\ref{thm:stability-groups-random} will use the~following corollary of Theorem~\ref{thm:stability-groups} and the~so-called \emph{removal lemma for Abelian groups} proved by Green~\cite{Gr}.

\begin{cor}[{\cite{BaMoSa}}]
  \label{cor:group-stability}
  Let $q$ be a~prime with $q \equiv 2 \pmod 3$, let $G$ be a~group of type $I(q)$, and let $\eps$ be a~constant satisfying $0 < \eps < 1/(9q^2+9q)$. Then every $A \subseteq G$ with $|A| \ge (\mu(G) - \eps) |G|$ either contains at least $\eps^3|G|^2/27$ Schur triples or satisfies $|A \setminus A'| \le \eps|G|$ for some sum-free set $A'$ of~maximum size.
\end{cor}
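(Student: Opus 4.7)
My plan is to combine Green's removal lemma for Abelian groups~\cite{Gr} with the stability theorem of Green and Ruzsa (Theorem~\ref{thm:stability-groups}). Suppose that $A \subseteq G$ satisfies $|A| \geq (\mu(G) - \eps)|G|$ and that $A$ contains fewer than $\eps^3|G|^2/27$ Schur triples; my goal is then to exhibit a sum-free $A' \subseteq G$ of maximum size such that $|A \setminus A'| \leq \eps|G|$. The guiding principle is that a set with few Schur triples must be close to sum-free, and a sum-free set whose density is close to $\mu(G)$ must, by stability, be contained in a maximum sum-free set.

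The first step is to apply Green's removal lemma to produce a sum-free subset $B \subseteq A$ with $|A \setminus B| \leq \eps|G|$. The hypothesis that $A$ contains fewer than $\eps^3|G|^2/27$ Schur triples is precisely the quantitative input that the removal lemma requires to output a deletion of at most $\eps|G|$ elements; the apparently mysterious constant $\eps^3/27$ in the statement of the corollary is exactly calibrated to this application.

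Next, I would estimate
\[
|B| \geq |A| - |A \setminus B| \geq (\mu(G) - 2\eps)|G|.
\]
Because $\eps < 1/(9q^2+9q)$, we have $2\eps < 2/(9q^2+9q) < 1/(3q^2+3q)$ (the latter inequality being equivalent to $2/9 < 1/3$). Thus the sum-free set $B$ meets the hypothesis of Theorem~\ref{thm:stability-groups}, and we obtain a maximum-size sum-free $A' \subseteq G$ with $B \subseteq A'$. This inclusion gives $A \setminus A' \subseteq A \setminus B$, and hence $|A \setminus A'| \leq \eps|G|$, as required.

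The chief subtlety lies in the first step. Green's removal lemma, in its most general form, furnishes only a rather weak (tower-type) dependence between the number of Schur triples one may allow in $A$ and the number of elements one must delete to make $A$ sum-free. Verifying that the specific threshold $\eps^3|G|^2/27$ suffices to permit $\eps|G|$ deletions is the one technical point that requires care (this is essentially the calibration that makes the corollary quantitative); once this is set up, the rest of the proof is a direct appeal to the Green--Ruzsa stability theorem and a short containment argument.
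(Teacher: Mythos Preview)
Your outline matches the paper's own description exactly: the paper does not prove Corollary~\ref{cor:group-stability} but cites it from~\cite{BaMoSa}, saying only that it is ``a corollary of Theorem~\ref{thm:stability-groups} and the so-called removal lemma for Abelian groups proved by Green~\cite{Gr}.'' Your two-step plan (remove few elements to get a sum-free $B$, then apply Green--Ruzsa stability to $B$) is precisely this, and your arithmetic check that $2\eps < 1/(3q^2+3q)$ is correct.

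You are also right to flag the quantitative point as the genuine subtlety. Green's group removal lemma, as stated, passes through (a variant of) triangle removal and gives only tower-type dependence, so the specific polynomial threshold $\eps^3|G|^2/27$ does \emph{not} fall out of the general statement; it requires the more hands-on argument carried out in~\cite{BaMoSa}. Since the present paper gives no further details, your proposal is as complete as the paper itself on this point, and your identification of where the real work lies is accurate.
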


\section{Main results}

\label{sec:main-results}

Following~\cite{Sc}, we will phrase the~main result in the~language of sequences $\bH$ of uniform hypergraphs. In the~setting of Theorem~\ref{thm:stability-Gnp-new}, we have $V(H_n) = E(K_n)$ and the~edges of $H_n$ are edge sets of copies of a~fixed graph $H$ in $K_n$. Similarly, in Theorem~\ref{thm:F5F7-stability-Gnp}, $H_n$ represents copies of~the~appropriate book hypergraph in the~complete $3$- or $4$-uniform hypegraph on $n$ vertices. In~the~setting of Theorem~\ref{thm:stability-groups-random}, the~vertex set of $H_n$ will be the~set of elements of some Abelian group $G_n$ of order $n$, whereas the~edges of $H_n$ will be triples $\{x,y,z\}$ satisfying $x + y = z$. Since we are heavily borrowing from the~paper of Schacht~\cite{Sc}, our presentation and notation closely follow~\cite{Sc}.

In order to transfer an~extremal result from the~deterministic to the~probabilistic setting, both the~result of Conlon and Gowers~\cite{CoGo} and the~one of Schacht~\cite{Sc} require a~more robust version of this extremal result. One needs to assume that every sufficiently dense substructure (e.g., sufficiently large subgraph of the~complete graph) not only contains one copy of the~forbidden configuration (e.g., a~copy of a~fixed graph $H$), but also that the~number of copies of the~forbidden configuration in this substructure is of the~same order of magnitude as the~total number of copies of this configuration in the~full structure. Note that in many natural settings, such property does hold (e.g., by the~supersaturation theorem of Erd{\H o}s and Simonovits~\cite{ErSi83}). the~following definition makes this condition rigorous.

\begin{dfn}[{\cite{Sc}}]
  Let $\bH$ be a~sequence of $k$-uniform hypergraphs and let $\alpha$ be a~nonnegative real. We say that $\bH$ is \emph{$\alpha$-dense} if for every positive $\delta$, there exist positive $\eps$ and $N$ such that for every $n$ with $n \geq N$ and every $U \subseteq V(H_n)$ with $|U| \geq (\alpha + \delta)|V(H_n)|$, we have $|H_n[U]| \geq \eps |H_n|$.
\end{dfn}

Similarly as in~\cite{CoGo}, in order to transfer a~stability result from the~deterministic to the~probabilistic setting, we will need a~robust version of this stability result. Here, we need to assume that every sufficiently dense substructure is either close to some special substructure (e.g., a~$(\chi(H)-1)$-partite graph) or it contains many copies of the~forbidden configuration. Again, note that in many natural settings, such property does hold (e.g., as a~consequence of the~Erd{\H o}s-Simonovits stability theorem, Theorem~\ref{thm:stability}, and the~removal lemma for graphs, Theorem~\ref{thm:graph-removal-lemma}; see the~proof of Theorem~\ref{thm:stability-Gnp-new} in Section~\ref{sec:proofs-new-results}). the~following definition makes this condition rigorous.

\begin{dfn}[{\cite{AlBaMoSa}}]
  \label{dfn:stability}
  Let $\bH$ be a~sequence of $k$-uniform hypegraphs, let $\alpha$ be a~positive real and let $\bB$ be a~sequence of sets with $\B_n \subseteq \P(V(H_n))$. We say that $\bH$ is \emph{$(\alpha, \bB)$-stable} if for every positive~$\delta$, there exist positive $\eps$ and $N$ such that for every $n$ with $n \geq N$ and every $U \subseteq V(H_n)$ with $|U| \geq (\alpha - \eps)|V(H_n)|$, we have either $|H_n[U]| \geq \eps |H_n|$ or $|U \setminus B| \leq \delta |V(H_n)|$ for some $B \in \B_n$.
\end{dfn}

The second condition in Theorem~\ref{thm:main}, which one may view as a~measure of uniformity of the~distribution of copies of the~forbidden configuration in the~full structure, imposes a~lower bound on the~probability for which we can transfer our stability (extremal) result to the~random setting. Before we state this condition (Definition~\ref{dfn:boundedness}), we need to introduce some notation. For a~hypergraph $H$, a~vertex $v \in V(H)$, and a~set $U \subseteq V(H)$, let $\deg_i(v,U)$ denote the~number of edges of $H$ containing $v$ and at least $i$ vertices in $U \setminus \{v\}$. More precisely, let
\begin{equation}
  \label{eq:deg-i}
  \deg_i(v,U) = |\{ e \in H \colon v \in e \text{ and } |e \cap (U \setminus \{v\})| \geq i \}|.  
\end{equation}
For $q \in [0,1]$, we let $\mu_i(H,q)$ denote the~expected value of the~sum of squares of such degrees over all $v \in V(H)$ with $U$ replaced by the~$q$-random subset of $V(H)$, namely,
\[
\mu_i(H,q) = \Ex \left[ \sum_{v \in V} \deg_i^2(v, V_q) \right],
\]
where $V = V(H)$.

\begin{dfn}[{\cite{Sc}}]
  \label{dfn:boundedness}
  Let $\bH$ be a~sequence of $k$-uniform hypergraphs, let $\bp$ be a~sequence of probabilities, and let $K$ be a~positive constant. We say that $\bH$ is \emph{$(K,\bp)$-bounded} if for every $i \in \{0, \ldots, k-1\}$, there exists an~$N$ such that for every $n$ with $n \geq N$ and every $q \in [0,1]$ with $q \geq p_n$, we have
  \[
  \mu_i(H_n,q) \leq Kq^{2i}\frac{|H_n|^2}{|V(H_n)|}.
  \]
\end{dfn}

Finally, we are ready to state our main result, a~stability version of~\cite[Theorem~3.3]{Sc}.

\begin{thm}
  \label{thm:main}
  Let $\bH$ be a~sequence of $k$-uniform hypergraphs, let $\alpha$ be a~positive real, let $\bB$ be a~sequence of~sets with $\B_n \subseteq \P(V(H_n))$, and suppose that $\bH$ is $(\alpha, \bB)$-stable. Furthermore, let $K$ be a~positive real and let $\bp$ be a~sequence of probabilities such that $p_n^k |H_n| \to \infty$ as $n \to \infty$, $\bH$ is $(K,\bp)$-bounded, and $|\B_n| = \exp(o(p_n|V(H_n)|))$. Then for every positive $\delta$, there exist positive $\xi$, $b$, $C$, and $N$ such that for every $n$ with $n \geq N$ and every $q$ satisfying $C p_n \leq q \leq 1$, the~following holds with~probability at least $1 - \exp(-bq|V(H_n)|)$: Every subset $W \subseteq V(H_n)_q$ with $|W| \geq (\alpha-\xi) q |V(H_n)|$ that satisfies $|W \setminus B| \geq \delta q |V(H_n)|$ for every $B \in \B_n$ satisfies $|H[W]| \geq \xi q^k |H_n| > 0$.  
\end{thm}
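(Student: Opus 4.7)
The plan is to follow Schacht's transference scheme~\cite{Sc} closely, adapting it to carry the ``far-from-$\B_n$'' condition through the argument and to replace $\alpha$-density with $(\alpha,\bB)$-stability at the deterministic input. Schacht's proof proceeds by a multiple-exposure boost, building up from a base scale $q^* \asymp p_n$ to an arbitrary $q \geq Cp_n$ while maintaining an exponentially small failure probability; the $(K,\bp)$-boundedness hypothesis is used, via Lemma~\ref{lemma:upper-tail}, to control the relevant degree-sum statistics at each stage. I would preserve this skeleton and adjust the content to accommodate the stability condition, proving by induction over $O(\log(q/p_n))$ dyadic scales $q_0 = q^* < q_1 < \ldots < q_s = q$ with $q_{j+1} \approx 2q_j$ a statement of the form: with probability $\geq 1 - \exp(-b_j q_j |V(H_n)|)$, every $W \subseteq V(H_n)_{q_j}$ with $|W| \geq (\alpha - \xi_j) q_j |V(H_n)|$ that is $\delta_j$-far from every $B \in \B_n$ satisfies $|H_n[W]| \geq \xi_j q_j^k |H_n|$.

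For the base case, suppose $W \subseteq V(H_n)_{q^*}$ has $|W| \geq (\alpha - \xi_0)q^*|V(H_n)|$ and $|W \setminus B| \geq \delta_0 q^*|V(H_n)|$ for every $B \in \B_n$. Using Lemma~\ref{lemma:upper-tail} to prune a small exceptional set $X$ from $V(H_n)_{q^*}$ controls the lower-order degree contributions, and then $(\alpha,\bB)$-stability applied to a suitably embedded version of $W$ delivers $|H_n[W]| \geq \xi_0 (q^*)^k |H_n|$. The assumption $p_n^k|H_n| \to \infty$ guarantees the promised strict inequality $|H_n[W]| > 0$. A union bound over $\B_n$ costs $\exp(o(p_n|V(H_n)|))$, which is absorbed into the $\exp(-\Omega(q|V(H_n)|))$ failure probability thanks to $|\B_n| = \exp(o(p_n|V(H_n)|))$.

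In the inductive boost from $q_j$ to $q_{j+1}$, couple $V(H_n)_{q_{j+1}}$ as $U' \cup U''$ with $U'$ and $U''$ independent random sets of (marginal) density $q_j$. Given a putative counterexample $W$ at level $j+1$, pass to $W' = W \cap U'$, which conditionally on $W$ is a $(q_j/q_{j+1})$-random subset of $W$, so Chernoff gives $|W'| \approx |W|/2$ and, for each fixed $B \in \B_n$, $|W' \setminus B| \approx |W \setminus B|/2$ with exponentially good probability. When these concentrations hold, $W'$ satisfies the inductive hypothesis (with slightly weakened parameters $\xi_j, \delta_j$, tracked carefully so that $\xi_s = \xi$ and $\delta_s = \delta$ at the top), and the resulting edge lower bound $|H_n[W]| \geq |H_n[W']| \geq \xi_j q_j^k|H_n|$ contradicts the assumed failure of $W$ provided $\xi_{j+1}$ is chosen slightly below $\xi_j/2^k$.

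The main obstacle, and the likely source of what the abstract calls the ``refined approach to multiple exposure,'' is that the ``far-from-$\B_n$'' condition on $W$ does not automatically transfer to $W'$: for each individual $B$ we have only approximate concentration, and the naive union bound over all pairs $(W, B)$ is far too expensive to sustain. The needed refinement is to union-bound over $B \in \B_n$ only -- a cheap $\exp(o(p_n|V(H_n)|))$ factor -- and, for each such $B$, to treat the event ``$|W' \setminus B| < \delta_j q_j|V(H_n)|$ while $|W \setminus B| \geq \delta_{j+1}q_{j+1}|V(H_n)|$'' as a large-deviation event for the random restriction, controlled uniformly in $W$ via Chernoff. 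Executing this uniformity -- ideally by exposing $U''$ first and reading off the offending $B$ before revealing $U'$, so that $W$ does not have to be fixed adversarially in advance -- is the principal technical challenge, and it is here that the degree-sum tools of Lemma~\ref{lemma:upper-tail} and $(K,\bp)$-boundedness are invoked a second time to cover the bad configurations by a short enough list.
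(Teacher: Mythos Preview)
Your proposal misreads the structure of Schacht's argument, and as a result the real work is hidden in a step you do not prove. The induction in~\cite{Sc} (and in the paper's Lemma~\ref{lemma:main}) is \emph{not} over dyadic scales of $q$; it is over a parameter $i \in \{0,\ldots,k\}$ counting how many vertices of an edge of $H_n[U]$ are required to lie in $W$, via the quantities $|E_U^i(W)| = |\{e \in H_n[U] : |e \cap W| \geq i\}|$. At $i=0$ this is $|H_n[U]|$, a deterministic object to which $(\alpha,\bB)$-stability applies directly; at $i=k$ it is $|H_n[W]|$. The multiple exposure $U_q = U_{q_1} \cup \ldots \cup U_{q_R}$ occurs \emph{inside} each step $i \to i+1$, at the fixed target density $q$, and is combined with a rich-vertex argument, Cauchy--Schwarz, and Lemma~\ref{lemma:upper-tail} to push $i$ up by one. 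In your scheme all of this is your ``base case'' at $q^* \asymp p_n$, for which ``$(\alpha,\bB)$-stability applied to a suitably embedded version of $W$'' is not an argument: stability speaks about subsets of $V(H_n)$ of density $\geq \alpha - \eps$, whereas $W$ has density $\approx \alpha q^*$, and closing that gap is exactly the content of the $i$-induction. Your dyadic boost $q_j \to q_{j+1}$ via $H_n[W] \supseteq H_n[W']$ is correct but essentially trivial, and it forces $\xi_{j+1} \lesssim \xi_j/2^k$, so after $\asymp \log(q/p_n)$ steps $\xi$ becomes $\asymp (p_n/q)^k$ rather than a constant independent of $q$, weaker than what is claimed.

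You do correctly identify the new obstacle --- carrying $|W \setminus B| \geq \delta q|V|$ through the multi-round exposure --- and your instinct (union-bound over $\B_n$, Chernoff for each fixed $B$) is right. The paper's execution (Claim~\ref{claim:star}) is cleaner than your ``expose $U''$ first'' idea: condition on the \emph{union} $U_{q_1} \cup \ldots \cup U_{q_R} = \hat U$ for a bad outcome $\hat U$, fix a single witnessing $W \subseteq \hat U$, and observe that under this conditioning each $U_{q_s}$ is a $(q_s/q)$-random subset of $\hat U$. Then $W_s = W \cap U_{q_s}$ has the required size and, for each fixed $B$, the required distance from $B$, with Chernoff failure $\exp(-\Omega(q_s|V|))$; the union bound over $B$ costs only $|\B_n| = \exp(o(p_n|V|))$. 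Since $W$ is fixed before the decomposition is revealed, no union bound over $W$ is needed, and one gets $1 - P(\S) \leq 2(1 - P^*(\S^*))$. This reduction to the product space is the paper's main new ingredient; the rest of the $i \to i+1$ step then follows~\cite{Sc} verbatim.
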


\begin{remark}
  Note that unlike~\cite[Theorem~3.3]{Sc}, the~statement of Theorem~\ref{thm:main} no longer contains the~somewhat artificial assumption that $q \leq 1/\omega_n$ for some sequence $\bomega$ satisfying $\omega_n \to 0$ as~$n \to \infty$. This is due to our refined treatment of multiple exposure in the~proof of Lemma~\ref{lemma:main}, see Section~\ref{sec:mult-expos-trick} and the~discussion at the~beginning of Section~\ref{sec:proof-lemma-main}.
\end{remark}

Similarly as in~\cite{Sc}, Theorem~\ref{thm:main} will be derived from a~stronger statement, Lemma~\ref{lemma:main} below, which will be proved by induction. Before we state it, we need a~few more definitions. For a~$k$-uniform hypergraph $H$, sets $W$ and $U$ with $W \subseteq U \subseteq V(H)$, and an~integer $i \in \{0, \ldots, k\}$, we let $E_U^i$ denote the~edges of $H[U]$ that have at least $i$ vertices in $W$, namely,
\[
E_U^i(W) = \{e \in H[U] \colon |e \cap W| \geq i\}.
\]
Note that for every $U \subseteq V(H)$ and every $W \subseteq U$,
\[
E_U^0(W) = H[U] \quad\text{and}\quad E_U^k(W) = H[W].
\]

\begin{lemma}
  \label{lemma:main}
  Let $\bH$ be a~sequence of $k$-uniform hypergraphs, let $\alpha$ be a~positive real, let $\bB$ be a~sequence of sets with $\B_n \subseteq \P(V(H_n))$, and suppose that $\bH$ is $(\alpha, \bB)$-stable. Furthermore, let $K$ be a~positive real and let $\bp$ be a~sequence of probabilities such that $p_n^k |H_n| \to \infty$ as $n \to \infty$, $|\B_n| = \exp(o(p_n|V(H_n)|))$, and $\bH$ is $(K,\bp)$-bounded. Then for every $i \in \{0, \ldots, k\}$ and every positive $\delta$, there exist positive $\xi$, $b$, $C$, and $N$ such that for all $\beta, \gamma \in (0,1]$ with $\beta \gamma \geq \alpha - \xi$, every $n$ with $n \geq N$, and every $q$ satisfying $C p_n \leq q \leq 1$, the~following holds:\\
  For every $U \subseteq V(H_n)$ with $|U| \geq \beta|V(H_n)|$, with probability at least $1 - \exp(-bq|V(H_n)|)$, the~random set $U_q$ has the~following property: Every subset $W \subseteq U_q$ with $|W| \geq \gamma q |U|$ that satisfies $|W \setminus B| \geq \delta q |V(H_n)|$ for every $B \in \B_n$ satisfies $|E_U^i(W)| \geq \xi q^i |H_n|$.
\end{lemma}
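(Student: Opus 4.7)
The plan is to proceed by induction on $i\in\{0,\ldots,k\}$: the base case $i=0$ reduces directly to the $(\alpha,\bB)$-stability hypothesis together with a single Chernoff bound, and the inductive step $i-1\Rightarrow i$ is driven by a two-round exposure of $U_q$ combined with the $(K,\bp)$-boundedness of $\bH$.

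\textbf{Base case.} For $i=0$ we have $E_U^0(W)=H_n[U]$, which does not depend on $W$; moreover, since $\gamma\le 1$, the hypothesis $\beta\gamma\ge\alpha-\xi$ forces $|U|\ge(\alpha-\xi)|V(H_n)|$. Apply the $(\alpha,\bB)$-stability assumption with the parameter ``$\delta$'' of Definition~\ref{dfn:stability} replaced by some $\delta'<\delta/2$ to get a corresponding $\eps$: either $|H_n[U]|\ge\eps|H_n|$, in which case picking $\xi\le\eps$ finishes the argument deterministically, or $|U\setminus B|\le\delta'|V(H_n)|$ for some $B\in\B_n$. In the latter sub-case the binomial $|U_q\setminus B|$ has mean at most $\delta' q|V(H_n)|$, so Lemma~\ref{lemma:Chernoff} yields $|U_q\setminus B|<\delta q|V(H_n)|$ except on an event of probability $\exp(-\Omega(q|V(H_n)|))$; on its complement every $W\subseteq U_q$ violates the admissibility constraint, and the conclusion holds vacuously.

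\textbf{Inductive step.} Assume the claim at level $i-1$. Fix a large $C_0$, set $q_1=q/C_0$ and $q_2=(q-q_1)/(1-q_1)$, and view $U_q$ as the coupled union $U_{q_1}\cup(U\setminus U_{q_1})_{q_2}$. Expose $U_{q_1}$ first: invoking the induction hypothesis on the same $U$ at level $i-1$, with $q$ replaced by $q_1$ and $\delta$ shrunk by a constant factor, and Lemma~\ref{lemma:upper-tail} at level $i-1$, we may assume that off an event of probability $\exp(-\Omega(q|V(H_n)|))$ every admissible $W_1\subseteq U_{q_1}$ satisfies $|E_U^{i-1}(W_1)|\ge\xi_1 q_1^{i-1}|H_n|$, and after discarding a negligible set $X$ the sum $\sum_v\deg_{i-1}^2(v,U_{q_1}\setminus X)$ is at most $4^kk^2Kq_1^{2(i-1)}|H_n|^2/|V(H_n)|$. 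For any admissible $W\subseteq U_q$, set $W_1=W\cap U_{q_1}$; under the coupling $W_1$ is a $(q_1/q)$-random subset of $W$, so Chernoff gives $|W_1|\approx(q_1/q)|W|$ and $|W_1\setminus B|\approx(q_1/q)|W\setminus B|$ for every $B\in\B_n$, making $W_1$ admissible at level $i-1$ with the rescaled parameters; the large lower bound on $|E_U^{i-1}(W_1)|$ therefore applies.

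\textbf{Bridging $i-1$ to $i$ and the main obstacle.} The heart of the proof is upgrading $|E_U^{i-1}(W_1)|\ge\xi_1 q_1^{i-1}|H_n|$ to $|E_U^i(W)|\ge\xi q^i|H_n|$. Partition the edges of $E_U^{i-1}(W_1)$ by whether $|e\cap W_1|\ge i$ (in which case $e\in E_U^i(W_1)\subseteq E_U^i(W)$ automatically, and since $q\le 1$ one obtains $|E_U^i(W_1)|\ge\xi q^i|H_n|$ provided $\xi\le\xi_1/(2C_0^{i-1})$) or $|e\cap W_1|=i-1$. In the remaining case each such $e$ contributes to $E_U^i(W)$ exactly when one of the $k-i+1$ vertices of $e\setminus W_1\subseteq U\setminus U_{q_1}$ is placed by the adversary into $W\setminus W_1$; weighting each $v\in U\setminus U_{q_1}$ by the number of such $e$ containing $v$ (which is dominated by $\deg_{i-1}(v,U_{q_1})$), the second-moment bound from Lemma~\ref{lemma:upper-tail} says that these weights cannot concentrate on a tiny set of vertices. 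The main obstacle is combining this spreading with the adversary's only two constraints---$|W\setminus W_1|\gtrsim(1-1/C_0)\gamma q|U|$ and $|W\setminus B|\ge\delta q|V(H_n)|$ for every $B\in\B_n$---to force the adversary to accumulate total weight at least $(k-i+1)\xi q^i|H_n|$; the $\B_n$-avoidance condition is what rules out pathological clustering of the adversary's choice on low-weight vertex classes. All of this has to be bookkept so that the failure probability at each exposure stage, together with the union bound over the $\exp(o(q|V(H_n)|))$ shape classes of $(W,\{|W\setminus B|\}_{B\in\B_n})$ permitted by $|\B_n|=\exp(o(p_n|V(H_n)|))$, stays below $\exp(-bq|V(H_n)|)$. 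This delicate two-stage treatment is precisely what removes the artificial $q\ll 1$ restriction of~\cite{Sc}, as the remark following Theorem~\ref{thm:main} advertises.
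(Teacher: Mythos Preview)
Your base case is fine and matches the paper. The inductive step, however, has a genuine gap.

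The two-round exposure you propose is not enough. After one round, the induction hypothesis and Lemma~\ref{lemma:upper-tail} together with Cauchy--Schwarz only guarantee that the set of ``rich'' vertices
\[
Z_1=\Bigl\{u\in U:\deg_{i-1}(u,W_1,U)\ge \tfrac{\xi'}{2}q_1^{\,i-1}\frac{|H|}{|V|}\Bigr\}
\]
has size at least $\frac{(\xi')^2}{4^{k+1}k^2K}|V|$, which is a \emph{small} constant fraction of $|V|$. Nothing in the admissibility constraints $|W|\ge\gamma q|U|$ and $|W\setminus B|\ge\delta q|V|$ forces the adversary to place any of $W\setminus W_1$ inside $Z_1$: the complement $U\setminus Z_1$ can still have density well above $\gamma$, and there is no reason whatsoever for the low-weight vertex set $U\setminus Z_1$ to lie near any element of $\B_n$. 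Your claim that ``the $\B_n$-avoidance condition is what rules out pathological clustering of the adversary's choice on low-weight vertex classes'' is therefore incorrect; the $\B_n$-condition plays no role at this point of the argument.

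This is exactly why the paper uses $R=\lceil 4^{k+1}k^2K/(\xi')^2+1\rceil$ rounds rather than two. Each round either finishes (Case~1: the accumulated rich set $Z(s-1)$ already covers so much of $U$ that $W_s$ must intersect it substantially), or it grows $Z(s)$ by at least $|V|/R$ (Case~2: apply the induction hypothesis to $U'=U\setminus Z(s-1)$ and repeat the Cauchy--Schwarz argument). After at most $R-1$ rounds of the second type, $Z(s-1)$ is forced to be large enough that Case~1 applies. The $\B_n$-avoidance condition enters only through the ``multiple exposure trick'' (Claim~\ref{claim:star}): one must verify that, in a typical decomposition $U_q=U_{q_1}\cup\ldots\cup U_{q_R}$, each slice $W_s=W\cap U_{q_s}$ inherits both the density and the $\B_n$-avoidance conditions, so that the induction hypothesis may be applied to each $W_s$. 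It is this careful conditional analysis---not the bridging step you describe---that removes the $q\ll 1$ restriction.
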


\section{Proof of Lemma~\ref{lemma:main}}

\label{sec:proof-lemma-main}

The proof of Lemma~\ref{lemma:main} follows very closely the~proof of~\cite[Lemma~3.4]{Sc}. For easier comparison, our notation mirrors (with few minor changes) the~notation used in~\cite{Sc}. the~proof goes by induction on~$i$. Similarly as in~\cite{Sc}, the~base of the~induction (Section~\ref{sec:induction-base}), which can be viewed as a~justification of our choice of the~definition of $(\alpha,\bB)$-stability (Definition~\ref{dfn:stability}), follows very easily. the~proof of the~induction step (Section~\ref{sec:induction-step}) is much more involved. As in~\cite{Sc}, we construct the~elements of $E_U^{i+1}(W)$ in stages and hence we expose the~random set $U_q$ in several rounds, letting $U_q = U_{q_1} \cup \ldots \cup U_{q_R}$ for appropriately chosen $R$ and $q_1, \ldots, q_R$. Here comes the~main new obstacle. Unlike the~extremal setting considered in~\cite{Sc}, the~most important property of the~sets $W \subseteq U_q$ that we have to consider, i.e., $|W \setminus B| \geq \delta q |V(H_n)|$ for every $B \in \B_n$, no longer implies the~corresponding property, $|(W \cap U_{q_s}) \setminus B| \geq \delta' q_s |V(H_n)|$ for every $B \in \B_n$, in the~sets $U_{q_s}$. the~solution to this problem (Section~\ref{sec:mult-expos-trick}), which is the~main novelty in our approach, is analysing in more detail the~relations between the~probability space of the~random sets $U_q$ and the~richer space of the~sequences of random sets $U_{q_1}, \ldots, U_{q_R}$. Even though the~crucial property of the~set $W$ mentioned above does not imply the~analogous property relative to the~sets $U_{q_s}$ in every sequence $U_{q_1}, \ldots, U_{q_R}$, this does happen in a~\emph{typical} representation of $U_q$ as $U_{q_1} \cup \ldots \cup U_{q_R}$, see Claim~\ref{claim:star}. This observation allows us to replace our setting of a~single random set $U_q$ to the~setting of sequences of independent random sets $U_{q_1}, \ldots, U_{q_R}$, which, as already proved by~\cite{Sc}, is much more convenient to work in. Moreover, the~more rigorous treatment of the~equivalence between these two settings allow us to remove the~somewhat artificial assumption $q \leq 1/\omega_n$ that was necessary in the~approach taken in~\cite{Sc}. the~rest is as in the~proof of~\cite[Lemma~3.4]{Sc}. In each of the~$R$ rounds, we either construct `many' elements of $E_U^{i+1}(W)$ or, appealing to the~inductive assumption, we exhibit more than $\frac{1}{R} |V(H_n)|$ new `rich' vertices in $U$ that complete `many' elements of $E_U^i(W)$ to elements of $E_U^{i+1}(W)$, see Claim~\ref{claim:main}. Since the~latter can happen at most $R-1$ times ($U$ contains at most $|V(H_n)|$ vertices), Lemma~\ref{lemma:main} will follow.

\subsection{Setup}

Let $\bH$ be a~sequence of $k$-uniform hypergraphs, let $\bB$ be a~sequence of sets with $\B_n \subseteq \P(V(H_n))$, let $\bp$ be a~sequence of probabilities, and let $\alpha$ and $K$ be positive constants such that $\bH$ is $(\alpha,\bB)$-stable and $(K, \bp)$-bounded, $|\B_n| = \exp(o(p_n|V(H_n)|))$, and $p_n^k |H_n| \to \infty$ as $n \to \infty$. Note for future reference that since trivially $|H_n| \leq |V(H_n)|^k$, the~last assumption implies that
\begin{equation}
  \label{eq:pnVHn}
  p_n|V(H_n)| \to \infty \text{ as $n \to \infty$}.
\end{equation}
Finally, let $\delta$ be a~positive constant. We prove Lemma~\ref{lemma:main} by induction on $i$.

\subsection{Induction base $(i=0)$}

\label{sec:induction-base}

The base of induction follows quite easily from the~$(\alpha, \bB)$-stability of $\bH$. Let $\xi = \eps_{\ref{dfn:stability}}(\delta/2)$ and assume that $n \geq N$, where $N$ is sufficiently large; in particular, $N \geq N_{\ref{dfn:stability}}(\delta/2)$. Moreover, let $b = \delta/32$ and $C=1$. For the~sake of clarity of the~presentation, let $H = H_n$, let $V = V(H_n)$, and let $\B = \B_n$. Let $\beta, \gamma \in (0,1]$ satisfy $\beta\gamma \geq \alpha - \xi$, let $q$ satisfy $q \geq Cp_n$, and fix some $U \subseteq V$ with $|U| \geq \beta |V|$. Since $H$ is $(\alpha, \bB)$-stable and $|U| \geq (\alpha - \xi)|V|$, if $|U \setminus B| > (\delta/2)|V|$ for every $B \in \B$, then
\[
|E_U^0(W)| = |H[U]| \geq \xi |H|,
\]
so we may assume that $|U \setminus B| \leq (\delta/2)|V|$ for some $B \in \B$. Observe that for every $W \subseteq U_q$, one clearly has $|W \setminus B| \leq |U_q \setminus B|$. Hence, by Chernoff's inequality, with probability at least $1 - \exp(-bq|V|)$, the~set $U_q$ (vacuously) has the~claimed property, as it satisfies $|U_q \setminus B| < \delta q |V|$.

\subsection{Induction step $(i \to i+1)$}

\label{sec:induction-step}

Let $\xi'$, $b'$, $C'$, and $N'$ be the~constants whose existence is asserted by the~inductive assumption with parameters $i$ and $\delta/4$, i.e., let
\[
\xi' = \xi_{\ref{lemma:main}}(i, \delta/4), \quad b' = b_{\ref{lemma:main}}(i, \delta/4), \quad C' = C_{\ref{lemma:main}}(i, \delta/4), \quad\text{and}\quad N' = N_{\ref{lemma:main}}(i, \delta/4).
\]
We also let $\eta = \min\{\xi'/4, \delta/8\}$ and let $\hat{b} = b_{\ref{lemma:upper-tail}}(\eta)$. Throughout the~proof, we will assume that $n \geq N$, where $N$ is sufficiently large; in particular, $N \geq \max\{ N', N_{\ref{lemma:upper-tail}}(\eta)\}$. Similarly as before, for the~sake of clarity of the~presentation, we let $H = H_n$, $V = V(H_n)$, $\B = \B_n$, and $p = p_n$. We first define some constants. We set
\begin{equation}
  \label{eq:R}
  R = \left\lceil \frac{4^{k+1}k^2K}{(\xi')^2} + 1 \right\rceil
\end{equation}
and let
\[
\xi = \frac{(\xi')^2}{8k(RL^R)^{i+1}},
\quad
b = \min\left\{ \frac{(\xi')^2}{16^2},  \frac{b^*}{40RL^R} \right\},
\quad\text{and}\quad
C = RL^RC',
\]
where
\[
b^* = \min \left\{ \frac{\beta(\xi')^2}{16^2}, \frac{b'}{4}, \frac{\hat{b}}{4} \right\}
\quad\text{and}\quad
L = \frac{3}{b^*}.
\]
Finally, let $\beta, \gamma \in (0,1]$ satisfy $\beta\gamma \geq \alpha - \xi$, let $q$ satisfy $q \geq Cp_n$, and fix some $U \subseteq V$ with $|U| \geq \beta |V|$. Note that WLOG we may assume that $|U| = \beta |V|$ and that $\xi' \leq \delta/2$.

\subsubsection{Multiple exposure trick}

\label{sec:mult-expos-trick}

Let $\S$ denote the~event that the~random set $U_q$ possesses the~postulated stability property:

\smallskip
\noindent
\begin{center}
  \begin{tabular}{rp{0.85\textwidth}}
    $\S$: & Every subset $W \subseteq U_q$ with $|W| \geq \gamma q |U|$ that satisfies $|W \setminus B| \geq \delta q |V|$ for every $B \in \B$ satisfies $|E_U^{i+1}(W)| \geq \xi q^{i+1} |H|$.
  \end{tabular}
\end{center}

\smallskip
\noindent
In order to estimate the~probability of $\S$, we will consider a~richer probability space that is in~a~natural correspondence with the~space $\P(U)$ of all subsets of $U$ equipped with the~obvious probability measure $P$, i.e., the~distribution of the~random variable $U_q$. To this end, let $q_1, \ldots, q_R \in [0,1]$ be the~unique sequence of numbers that satisfies
\begin{equation}
  \label{eq:qs}
  1 - q = \prod_{s=1}^R (1 - q_s) \quad\text{and}\quad q_{s+1} = L q_s \text{ for every $s \in [R-1]$},
\end{equation}
and observe that
\begin{equation}
  \label{eq:qs-bounds}
  \sum_{s = 1}^R q_s \geq q \quad\text{and consequently}\quad q_s \geq q_1 \geq \frac{q}{RL^R}\text{ for every $s \in [R]$}.
\end{equation}
The richer probability space will be the~space $\P(U)^R$ equipped with the~product measure $P^*$ that is the~distribution of the~sequence $(U_{q_1}, \ldots, U_{q_R})$ of independent random variables, where for each $s$, the~variable $U_{q_s}$ is a~$q_s$-random subset of $U$. Crucially, observe that due to our choice of $q_1, \ldots, q_s$, see~\eqref{eq:qs}, the~natural mapping
\[
\varphi \colon \P(U)^R \to \P(U) \quad \text{defined by} \quad \varphi(U_1, \ldots, U_R) = U_1 \cup \ldots \cup U_R
\]
is measure preserving, i.e., for every $U_0 \subseteq U$,
\[
P(U_0) = P^*(\varphi^{-1}(U_0)).
\]
In other words, the~variables $U_q$ and $U_{q_1} \cup \ldots \cup U_{q_R}$ have the~same distribution. Finally, let $\delta^* = \delta/2$, let $\gamma^* = \gamma - \xi'/4$, and consider the~following event in the~space $\P(U)^R$:

\smallskip
\noindent
\begin{center}
  \begin{tabular}{rp{0.85\textwidth}}
    $\S^*$: & For every $W_1 \subseteq U_{q_1}, \ldots, W_R \subseteq U_{q_R}$ such that $|W_s| \geq \gamma^* q_s |U|$ and $|W_s \setminus B| \geq \delta^* q_s |V|$ for every $s \in [R]$ and every $B \in \B$, we have $|E_U^{i+1}(W_1 \cup \ldots \cup W_R)| \geq \xi q^{i+1} |H|$.
  \end{tabular}
\end{center}

\smallskip
\noindent
There are two reasons why we consider the~probability space $\P(U)^R$. the~first reason is that the~probability of $\S^*$ is much easier to estimate than the~probability of $\S$. the~second reason is that a~lower bound on $P^*(\S^*)$ implies a~(marginally weaker) lower bound on $P(\S)$, which we show below.

\begin{claim}
  \label{claim:star}
  $1 - P(S) \leq 2 \cdot (1 - P^*(S^*))$.
\end{claim}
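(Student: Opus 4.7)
The plan is to use the measure-preserving correspondence $\varphi$ to reduce Claim~\ref{claim:star} to a pointwise conditional estimate. Since $\varphi$ is measure-preserving and $U_q = U_{q_1} \cup \dots \cup U_{q_R}$ as random variables on $\P(U)^R$, one has
\[
P^*(\overline{\S^*}) \;\geq\; \sum_{U_0 \text{ fails } \S} P^*\bigl(\overline{\S^*} \,\big|\, U_q = U_0\bigr)\, P(U_q = U_0),
\]
so it would suffice to show that $P^*(\overline{\S^*} \mid U_q = U_0) \geq 1/2$ for every realisation $U_0$ of $U_q$ that fails $\S$: summing would then give $P^*(\overline{\S^*}) \geq \tfrac{1}{2} P(\overline{\S})$, which rearranges to the claim.

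To prove this pointwise bound, fix such a $U_0$ and choose once and for all a \emph{canonical witness} $W = W(U_0) \subseteq U_0$ with $|W| \geq \gamma q|U|$, $|W \setminus B| \geq \delta q|V|$ for every $B \in \B$, and $|E_U^{i+1}(W)| < \xi q^{i+1}|H|$. Set $W_s := W \cap U_{q_s}$. Since $W \subseteq U_q$, one has $W_1 \cup \dots \cup W_R = W$, so the third property already implies $|E_U^{i+1}(W_1 \cup \dots \cup W_R)| < \xi q^{i+1}|H|$. Thus the tuple $(W_1,\dots,W_R)$ witnesses $\overline{\S^*}$ provided only that $|W_s| \geq \gamma^* q_s|U|$ and $|W_s \setminus B| \geq \delta^* q_s|V|$ for every $s \in [R]$ and every $B \in \B$. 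Conditional on $U_q = U_0$, a short calculation (using $U_{q_s} \subseteq U_q$ together with $P(v \in U_{q_s}) = q_s$ and $P(v \in U_q) = q$) shows that each $v \in U_0$ independently lies in $U_{q_s}$ with conditional probability $q_s/q$. Hence $|W_s|$ is conditionally distributed as $\Bin(|W|, q_s/q)$ with mean $\mu_s := |W|q_s/q \geq \gamma q_s|U|$, and $|W_s \setminus B| \sim \Bin(|W \setminus B|, q_s/q)$ conditionally, with mean at least $\delta q_s|V|$.

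The main technical subtlety, and where I expect the chief obstacle to lie, is that $\mu_s$ can exceed a constant multiple of $\gamma q_s|U|$ for atypically large $U_0$, so a naive Chernoff bound $\exp(-a^2/(2\mu_s))$ with $a = (\xi'/4) q_s|U|$ would degrade. The workaround is to observe that the true deviation is $\mu_s - \gamma^* q_s|U|$, not $(\xi'/4)q_s|U|$, and that the function $y \mapsto (y - \gamma^*)^2/y$ is increasing on $[\gamma,\infty)$; this yields the \emph{uniform} bound
\[
\frac{(\mu_s - \gamma^* q_s|U|)^2}{2\mu_s} \;\geq\; \frac{(\xi')^2 q_s|U|}{32\gamma}
\]
for every $\mu_s \geq \gamma q_s|U|$, and hence via Lemma~\ref{lemma:Chernoff}
\[
P\bigl(|W_s| < \gamma^* q_s|U| \,\big|\, U_q = U_0\bigr) \;\leq\; \exp\bigl(-\Omega(q_s|U|)\bigr),
\]
with the analogous bound $\exp(-\Omega(q_s|V|))$ for $|W_s \setminus B|$. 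A union bound over $s \in [R]$ and $B \in \B$ then controls the conditional failure probability by $2R|\B|\exp(-\Omega(p|V|))$: indeed, $q_s|U| \geq q_1\beta|V| \geq \beta C' p|V|$ by~\eqref{eq:qs-bounds} and $C = RL^R C'$, and $|\B| = \exp(o(p|V|))$ together with~\eqref{eq:pnVHn} makes this quantity smaller than $1/2$ for every sufficiently large $n$. This completes the pointwise bound and hence the claim.
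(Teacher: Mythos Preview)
Your argument is correct and follows essentially the same route as the paper's own proof: reduce to a pointwise bound $P^*(\S^* \mid U_q = U_0) \leq 1/2$ for each $U_0$ failing $\S$, pick a fixed witness $W$, set $W_s = W \cap U_{q_s}$, note that conditionally each $U_{q_s}$ is a $(q_s/q)$-random subset of $U_0$, and finish with Chernoff plus a union bound over $s$ and $B \in \B$. Your monotonicity observation for $y \mapsto (y-\gamma^*)^2/y$ is a clean way to make explicit the uniform constant that the paper simply asserts to exist.
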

\begin{proof}
  Note that in order to prove the~claim, it suffices to show that
  \[
  P^*(\S^* \mid \varphi^{-1}(\bU)) = P^*(\S^* \mid U_{q_1} \cup \ldots \cup U_{q_R} = \bU) \leq 1/2
  \]
  for every $\bU$ that does not satisfy $\S$. Consider an~arbitrary $\bU \subseteq U$ that does not satisfy $\S$. By~the~definition of $\S$, there exists a~set $W \subseteq \bU$ with $|W| \geq \gamma q |U|$ that satisfies $|W \setminus B| \geq \delta q |V|$ for every $B \in \B$ and $|E_U^{i+1}(W)| < \xi q^{i+1} |H|$. Consider the~event $\varphi^{-1}(\bU)$, i.e., the~event $U_{q_1} \cup \ldots \cup U_{q_R} = \bU$. Now, for each $s \in [R]$, let $W_s = W \cap U_{q_s}$. Since clearly $E_U^{i+1}(W) = E_U^{i+1}(W_1 \cup \ldots\cup W_R)$, it suffices to show that with probability at least $1/2$, we have $|W_s| \geq \gamma^*q_s|U|$ and $|W_s \setminus B| \geq \delta^*q_s|V|$ for every $s \in [R]$ and $B \in \B$.

  To this end, observe that conditioned on the~event $U_{q_1} \cup \ldots \cup U_{q_R} = \bU$, for each $s \in [R]$, the~variable $U_{q_s}$ has the~same distribution as $\bU_{q'_s}$, where $q'_s = q_s / q$ (although $U_{q_1}, \ldots, U_{q_R}$ are no longer independent). Recalling the~definitions of $\gamma^*$ and $\delta^*$, it now follows from Chernoff's inequality that for fixed $s \in [R]$ and $B \in \B$, both the~probability that $|W_s| < \gamma^* q_s |U| = \gamma^* q_s' q |U|$ and the~probability that $|W_s| < \delta^* q_s |V| = \delta^*q_s' q |V|$ are at most $\exp(-cq_s|V|)$, where $c$ is some positive constant depending only on $\alpha$, $\delta$, and $\xi'$. Since $q_s \geq q/(RL^R) \geq p$ for every $s \in [R]$, see~\eqref{eq:qs-bounds}, $|\B_n| = \exp(o(p_n|V(H_n)|))$, and \eqref{eq:pnVHn}, then the~claimed estimate follows from the~union bound, provided that $n$ is sufficiently large.
\end{proof}

\subsubsection{Estimating the~probability of $\S^*$}

In the~remainder of the~proof, we will work in the~space $\P(U)^R$ and estimate the~probability of the~event $S^*$, that is, $P^*(S^*)$. Let $U_{q_1}, \ldots, U_{q_R}$ be independent random subsets of $U$. Given $W_1 \subseteq U_{q_1}, \ldots, W_R \subseteq U_{q_R}$, let
\[
W(s) = (W_1, \ldots, W_s) \quad\text{and}\quad U(s) = (U_{q_1}, \ldots, U_{q_s}).
\]
We consider the~set $Z_s$ of `rich' vertices that extend many sets in $E_U^i(W_s)$ defined by
\[
Z_s = \left\{ u \in U \colon \deg_i(u, W_s, U) \geq \frac{\xi'}{2} q_s^i \frac{|H|}{|V|} \right\},
\]
where
\[
\deg_i(u, W_s, U) = \left| \left\{ e \in H \colon |e \cap (W_s \setminus \{u\})| \geq i \text{ and } e \subseteq U\right\} \right|,
\]
and let
\[
Z(s) = Z_1 \cup \ldots \cup Z_s.
\]
Now comes the~key step in the~proof. We show that with very high probability, for every $s \in [R]$, regardless of what happens in rounds $1, \ldots, s-1$, either $E_U^{i+1}(W_1 \cup \ldots \cup W_s)$ is large or the~set $Z(s)$ of `rich' elements grows by more than $|V|/R$.

\begin{claim}
  \label{claim:main}
  For every $s \in [R]$ and every choice of $W(s-1) \in \P(U)^{s-1}$, let $\S_{W(s-1)}^*$ denote the~event that $U_{q_s}$ has the~following property: For every $W_s \subseteq U_{q_s}$ with $|W_s| \geq \gamma^* q_s |U|$ that satisfies $|W_s \setminus B| \geq \delta^* q_s |V|$ for every $B \in \B_n$, either
  \begin{equation}
    \label{eq:done}
    |E_U^{i+1}(W_1 \cup \ldots \cup W_s)| \geq \xi q^{i+1} |H|
  \end{equation}
  or
  \begin{equation}
    \label{eq:Z-grows}
    |Z(s) \setminus Z(s-1)| \geq \frac{(\xi')^2}{4^{k+1}k^2K} |V|.
  \end{equation}
  Then for every $\bU \in \P(U)^{s-1}$,
  \[
  P^*(\S^*_{W(s-1)} \mid U(s-1) = \bU) \geq 1 - \exp(-2b^*q_s|V|),
  \]
  where $P^*(\S^*_{W(0)} \mid U(0) = \bU) = P^*(\S^*_{W(0)})$.
\end{claim}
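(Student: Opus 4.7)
The plan is to estimate $P^*(\neg\S^*_{W(s-1)} \mid U(s-1) = \bU)$ by combining two high-probability events depending only on the $q_s$-random set $U_{q_s}$, which is independent of $U(s-1)$. The first is the conclusion of Lemma~\ref{lemma:upper-tail} at scale $q_s$ with $\eta = \min\{\xi'/4, \delta/8\}$, which produces an exceptional set $X \subseteq V$ with $|X| \leq \eta q_s |V|$ satisfying $\sum_{v \in V} \deg_i^2(v, V_{q_s} \setminus X) \leq 4^k k^2 K q_s^{2i} |H|^2/|V|$. The second is the inductive hypothesis at level $i$ with parameter $\delta/4$, applied to the set $U^\circ := U \setminus Z(s-1)$ (deterministic after conditioning on $U(s-1)$) with parameters $\beta^\circ = |U^\circ|/|V|$ and $\gamma^\circ = (\alpha - \xi')/\beta^\circ$. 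Thanks to the choice $b^* \leq \min\{\hat b, b'\}/4$, a union bound yields a total failure probability of at most $\exp(-2 b^* q_s |V|)$, so it suffices to verify that on the intersection of both good events every valid $W_s \subseteq U_{q_s}$ satisfies~\eqref{eq:done} or~\eqref{eq:Z-grows}.

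Writing $\tilde W = W_1 \cup \dots \cup W_s$, fix such a $W_s$ and split into two cases by comparing $|W_s \cap Z(s-1)|$ to $\zeta q_s |V|$, where $\zeta := \xi'/4$. In Case~A $(|W_s \cap Z(s-1)| \geq \zeta q_s |V|)$, for each $v \in W_s \cap Z(s-1)$ pick any $s'(v) \leq s-1$ with $v \in Z_{s'(v)}$. Each edge $e$ contributing to $\deg_i(v, W_{s'(v)}, U)$ contains $v \in W_s \subseteq \tilde W$ and at least $i$ further vertices from $W_{s'(v)} \subseteq \tilde W$, hence lies in $E_U^{i+1}(\tilde W)$. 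Summing with overcounting at most $k$ and using $q_{s'(v)} \geq q_1 \geq q/(RL^R)$ yields
\[
|E_U^{i+1}(\tilde W)| \geq \frac{1}{k} |W_s \cap Z(s-1)| \cdot \frac{\xi'}{2} q_1^i \frac{|H|}{|V|} \geq \frac{(\xi')^2}{8 k (RL^R)^{i+1}} q^{i+1}|H| = \xi q^{i+1}|H|,
\]
which is~\eqref{eq:done}.

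In Case~B $(|W_s \cap Z(s-1)| < \zeta q_s |V|)$, set $W_s^\circ := (W_s \setminus Z(s-1)) \setminus X$. A direct computation using $\xi' \leq \delta/2$, $\xi \leq \xi'/2$, $\eta \leq \xi'/4$, and $\eta \leq \delta/8$ verifies that $W_s^\circ \subseteq (U^\circ)_{q_s}$ satisfies $|W_s^\circ|/(q_s|V|) \geq \alpha - \xi'$ and $|W_s^\circ \setminus B| \geq (\delta/4) q_s |V|$ for every $B \in \B$. The inductive hypothesis therefore yields $|E_{U^\circ}^i(W_s^\circ)| \geq \xi' q_s^i |H|$. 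Double-counting incidences then gives $\sum_{v \in U^\circ} \deg_i(v, W_s^\circ, U^\circ) \geq (k-i) \xi' q_s^i |H|$, and vertices $v \in U^\circ \setminus Z_s$ contribute at most $(\xi'/2) q_s^i |H|$ in total (since $W_s^\circ \subseteq W_s$ and $U^\circ \subseteq U$ force $\deg_i(v, W_s^\circ, U^\circ) \leq \deg_i(v, W_s, U) < (\xi'/2) q_s^i |H|/|V|$ for $v \notin Z_s$). Using $U^\circ \cap Z_s = Z_s \setminus Z(s-1) = Z_s'$, this gives $\sum_{v \in Z_s'} \deg_i(v, W_s^\circ, U^\circ) \geq (\xi'/2) q_s^i |H|$. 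Cauchy--Schwarz combined with the coupling $W_s^\circ \subseteq V_{q_s} \setminus X$ and the first good event then gives
\[
\Bigl(\tfrac{\xi'}{2} q_s^i |H|\Bigr)^2 \leq |Z_s'| \cdot 4^k k^2 K q_s^{2i} \frac{|H|^2}{|V|},
\]
which rearranges to~\eqref{eq:Z-grows}.

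The main obstacle will be the fiddly bookkeeping needed to ensure that the reduced set $W_s^\circ$ in Case~B still meets the size and stability thresholds required by the inductive hypothesis after losing up to $|W_s \cap Z(s-1)| + |X|$ vertices. This is precisely what drives the slightly odd constants in the setup --- $\gamma^* = \gamma - \xi'/4$ to absorb the Case~A threshold, $\delta^* = \delta/2$ to absorb $|X|$, $\eta \leq \min\{\xi'/4, \delta/8\}$ to control $|X|$, and $\xi = (\xi')^2/(8k(RL^R)^{i+1})$ so that Case~A delivers at least $\xi q^{i+1}|H|$ edges. Once these arithmetic fits are checked, the rest is routine.
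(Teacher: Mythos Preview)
Your overall strategy matches the paper's: condition on $U(s-1)$, invoke the inductive hypothesis on $U\setminus Z(s-1)$ together with Lemma~\ref{lemma:upper-tail}, and split into a case where $W_s$ already meets $Z(s-1)$ in many points (giving~\eqref{eq:done} via the counting identity~\eqref{eq:EUiWs}) versus a case where the inductive hypothesis and Cauchy--Schwarz produce many new rich vertices (giving~\eqref{eq:Z-grows}). The Case~A counting and the Case~B Cauchy--Schwarz step are both fine.

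There is, however, a genuine gap. You apply the inductive hypothesis to $U^\circ = U\setminus Z(s-1)$ with parameters $\beta^\circ = |U^\circ|/|V|$ and $\gamma^\circ = (\alpha-\xi')/\beta^\circ$, but Lemma~\ref{lemma:main} only applies when $\gamma^\circ\in(0,1]$, i.e.\ when $|U^\circ|\ge(\alpha-\xi')|V|$. Nothing you assume forces this: $Z(s-1)$ is determined by the adversarial choice of $W(s-1)$ and may cover almost all of $U$, and your Case~B condition $|W_s\cap Z(s-1)|<(\xi'/4)q_s|V|$ is a constraint on the \emph{random} set $W_s$, not on the deterministic set $U^\circ$. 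When $|U^\circ|<(\alpha-\xi')|V|$ the inductive hypothesis is unavailable and your Case~B argument collapses. The paper sidesteps this by splitting on the deterministic quantity $|U\setminus Z(s-1)|$ instead: in its Case~1, where $|U\setminus Z(s-1)|<(\gamma^*-\xi'/2)|U|$, a Chernoff bound on $|U_{q_s}\setminus Z(s-1)|$ forces $|W_s\cap Z(s-1)|\ge(\xi'/4)q_s|U|$ for \emph{every} admissible $W_s$, yielding~\eqref{eq:done} directly; only in Case~2, where the case assumption guarantees $\gamma'\le 1$, is the inductive hypothesis invoked. This extra Chernoff step is exactly what the term $\beta(\xi')^2/16^2$ in the definition of $b^*$ is for, and your bound ``$b^*\le\min\{\hat b,b'\}/4$'' omits it. The fix is easy---insert the paper's Case~1 (or equivalently add a third high-probability event bounding $|(U^\circ)_{q_s}|$)---but as written the argument is incomplete.
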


\subsubsection{Deducing Lemma~\ref{lemma:main} from Claims~\ref{claim:star} and~\ref{claim:main}}

For every $t \in [R]$, let $\A_t$ denote the~event that $|U_{q_t}| \leq 2q_t|V|$ and let $\A(s) = \A_1 \cap \ldots \cap \A_s$. Observe that by~\eqref{eq:qs},
\begin{equation}
  \label{eq:Us-}
  \sum_{t=1}^{s-1} 2q_t|V| \leq 3q_{s-1}|V| \leq \frac{3q_s|V|}{L}.
\end{equation}
By Chernoff's inequality, \eqref{eq:pnVHn}, and \eqref{eq:qs-bounds},
\[
P^*(\neg \A(s-1)) \leq \sum_{t=1}^{s-1} \exp\left(-\frac{q_t|V|}{16}\right) \leq \exp\left(-\frac{q_1|V|}{20} \right).
\]
Now, for every $s \in [R]$, let $\S_s^*$ denote the~event that $\A(s-1)$ holds and $S_{W(s-1)}^*$ holds for all $W(s-1) \subseteq U(s-1)$\footnote{We write $W(s-1) \subseteq U(s-1)$ to denote the~fact that $W_t \subseteq U_t$ for all $t \in [s-1]$}. Observe that if $S_s^*$ holds for all $s \in [R]$, then $S^*$ must hold since~\eqref{eq:Z-grows} in~Claim~\ref{claim:main} can occur at most $R-1$ times, see~(\ref{eq:R}). Let
\[
\bUU = \left\{ \bU \in \P(U)^{s-1} \colon |\bU_t| \leq 2q_t|V| \text{ for all $t \in [s-1]$}\right\}
\]
and note that
\begin{align}
  \label{eq:P*Ss*}
  P^*(\neg \S_s^*) & \leq P^*(\neg \A(s-1)) + P^*(\neg \S_s^* \wedge \A(s-1)) \\
  \nonumber
  &  = P^*(\neg \A(s-1)) + \sum_{\bU \in \bUU} P^*(\neg \S_s^* \wedge U(s-1) = \bU).
\end{align}
Now, by Claim~\ref{claim:main}, for every $\bU \in \bUU$,
\begin{align}
  \label{eq:P*Ss*bU}
  P^*(\neg \S_s^* \wedge U(s-1) = \bU) & = P^*(\neg \S_s^* \mid U(s-1) = \bU) \cdot P^*(U(s-1) = \bU) \\
  \nonumber
  & \leq \sum_{W(s-1) \subseteq \bU} P^*(\neg \S_{W(s-1)}^* \mid U(s-1) = \bU) \cdot P^*(U(s-1) = \bU) \\
  \nonumber
  & \leq 2^{\sum_{t=1}^{s-1}2q_t|V|} \cdot \exp(-2b^*q_s|V|) \cdot P^*(U(s-1) = \bU).
\end{align}
Since clearly $\sum_{\bU \in \bUU} P^*(U(s-1) = \bU) \leq 1$, it follows from~\eqref{eq:Us-}, \eqref{eq:P*Ss*}, and \eqref{eq:P*Ss*bU} that
\begin{align}
  \label{eq:P*S*}
  P^*(\neg S^*) & \leq \sum_{s=1}^R P^*(\neg S_s^*) \leq R\exp\left( -\frac{q_1|V|}{20} \right) + \sum_{s=1}^R 2^{\frac{3q_s|V|}{L}}\exp(-2b^*q_s|V|) \\
  \nonumber
  & \leq R\exp\left(-\frac{q_1|V|}{20}\right) + R\exp(-b^*q_1|V|) \leq \frac{1}{2}\exp(-bq|V|).
\end{align}
Now, Lemma~\ref{lemma:main} easily follows from \eqref{eq:P*S*} and Claim~\ref{claim:star}.

\subsubsection{Proof of Claim~\ref{claim:main}}

Let $s \in [R]$, condition on the~event $U(s-1) = \bU$ for some $\bU \in \P(U)^{s-1}$ and assume that $W(s-1)$ is given. Note that this uniquely defines $Z(s-1)$. Also, observe that it follows from the~definition of $Z(s-1)$ and~\eqref{eq:qs-bounds} that
\begin{align}
  \label{eq:EUiWs}
  |E_U^{i+1}(W(s))| & \geq \frac{1}{k} \sum_{w \in W_s} \deg_i(w, W(s-1), U) \geq \frac{1}{k} \cdot |W_s \cap Z(s-1)| \cdot \frac{\xi'}{2}q_1^i\frac{|H|}{|V|} \\
  \nonumber
  & \geq \frac{|W_s \cap Z(s-1)|}{q_s} \cdot \frac{\xi'}{2}\frac{q^{i+1}}{k(RL^R)^{i+1}}\frac{|H|}{|V|} = \frac{|W_s \cap Z(s-1)|}{(\xi'/4) q_s |V|} \cdot \xi q^{i+1} |H|,
\end{align}
hence it will be enough if we show that
\begin{equation}
  \label{eq:Ws-Zs-}
  |W_s \cap Z(s-1)| \geq \frac{\xi'}{4} q_s |V|.
\end{equation}
We consider two cases, depending on the~cardinality of $Z(s-1)$.

\medskip
\noindent
{\bf Case 1. $|U \setminus Z(s-1)| < (\gamma^* - \xi'/2)|U|$.}

\smallskip
\noindent
By Chernoff's inequality, with probability at least $1 - \exp(-2b^*q_s|V|)$, the~set $U_{q_s}$ satisfies $|U_{q_s} \setminus Z(s-1)| \leq (\gamma^* - \xi'/4)q_s|U|$. Consequently, for every $W_s \subseteq U$ with $|W_s| \geq \gamma^* q_s |U|$, we have
\[
|W_s \cap Z(s-1)| \geq |W_s| - |U_{q_s} \setminus Z(s-1)| \geq \frac{\xi'}{4} q_s |U|,
\]
which, by~\eqref{eq:EUiWs}, proves~\eqref{eq:done}, see~\eqref{eq:Ws-Zs-}.

\medskip
\noindent
{\bf Case 2. $|U \setminus Z(s-1)| \geq (\gamma^* - \xi'/2)|U|$.}

\smallskip
\noindent
In this case, we will apply the~inductive assumption to the~set $U \setminus Z(s-1)$. First, observe that if $|W_s \cap Z(s-1)| \geq (\xi'/4)q_s|V|$, then this, by~\eqref{eq:EUiWs}, proves~\eqref{eq:done}, see~\eqref{eq:Ws-Zs-}. Hence, from now on we may assume that the~inverse inequality holds, i.e., that
\begin{equation}
  \label{eq:Ws-Zs-neg}
  |W_s \cap Z(s-1)| < \frac{\xi'}{4} q_s |V|.
\end{equation}
Let
\[
U' = U \setminus Z(s-1), \quad \beta' = \frac{|U'|}{|V|}, \quad \text{and} \quad \gamma' = \left(\gamma^* - \frac{\xi'}{2}\right) \frac{|U|}{|U'|}.
\]
Clearly, $\beta', \gamma' \in (0,1]$ and
\[
\beta'\gamma' = \left( \gamma^* - \frac{\xi'}{2} \right) \cdot \frac{|U|}{|V|} = \left( \gamma^* - \frac{\xi'}{2} \right)\beta = \left(\gamma - \frac{3}{4}\xi' \right)\beta \geq \beta\gamma - \frac{3}{4}\xi' \geq \alpha - \xi - \frac{3}{4}\xi' \geq \alpha - \xi'.
\]
Note that by~(\ref{eq:qs-bounds}) and our assumption on $q$ and $C$, we have $q_s \geq \frac{q}{RL^R} \geq \frac{Cp}{RL^R} \geq C'p$ and hence by~the~inductive assumption applied to $U'$, with probability at least $1 - \exp(-b'q_s|V|)$, every subset $W' \subseteq U_{q_s}'$ with $|W'| \geq \gamma'|U_{q_s}'|$ such that $|W' \setminus B| \geq (\delta/4)q_s|V|$ for every $B \in \B$ satisfies $|E_{U'}^{i}(W')| \geq \xi'q_s^i|H|$. Moreover, it follows from Lemma~\ref{lemma:upper-tail} that with probability at least $1 - \exp(-\hat{b}q_s|V|)$, there exists a~set $X \subseteq U'_{q_s}$ satisfying 
\begin{equation}
  \label{eq:X}
  |X| \leq \eta q_s |V| = \min\left\{ \frac{\xi'}{4}, \frac{\delta^*}{4} \right\}q_s|V|.
\end{equation}
and
\begin{equation}
  \label{eq:U'-L2-norm}
  \sum_{u \in U'} \deg_i^2(u, U_{q_s}' \setminus X) \leq 4^kk^2Kq_s^{2i}\frac{|H|^2}{|V|}.
\end{equation}
Consider the~set $W' \subseteq U'$ defined by $W' = W_s \setminus (X \cup Z(s-1))$. It follows from~\eqref{eq:Ws-Zs-neg} and~\eqref{eq:X} that
\[
|W'| \geq |W_s| - |W_s \cap Z(s-1)| - |X| \geq \left(\gamma^* - \frac{\xi'}{4} - \eta \right) q_s |V| \geq \gamma' q_s |V|.
\]
and that for every $B \in \B$,
\[
|W' \setminus B| \geq |W_s \setminus B| - |W_s \cap Z(s-1)| - |X| \geq \left(\delta^* - \frac{\xi'}{4} - \eta\right) q_s |V| \geq \frac{\delta}{4} q_s |V|.
\]
From the~inductive assumption (which, recall, holds for $U_{q_s}'$ with probability at least $1 - \exp(-b'q_s|V|)$), we infer that
\begin{equation}
  \label{eq:U'-L1-norm}
  \sum_{u \in U'} \deg_i(u, W', U') \geq |E_{U'}^i(W')| \geq \xi' q_s^i |H|.
\end{equation}
Let
\[
Z_s' = \left\{ u \in U' \colon \deg_i(u,W',U') \geq \frac{\xi'}{2}q_s^i\frac{|H|}{|V|} \right\}
\]
and note that, by definition, $Z_s' \subseteq Z_s$ and, by~\eqref{eq:U'-L1-norm},
\begin{equation}
  \label{eq:Zs'-L1-norm}
  \sum_{u \in Z_s'} \deg_i(u, W', U') \geq \sum_{u \in U'} \deg_i(u, W', U') - |U' \setminus Z_s'| \cdot \frac{\xi'}{2} q_s^i \frac{|H|}{|V|} \geq \frac{\xi'}{2} q_s^i |H|.      
\end{equation}
It follows form~\eqref{eq:U'-L2-norm}, \eqref{eq:Zs'-L1-norm}, and the~Cauchy-Schwarz inequality that
\begin{align*}
  4^kk^2Kq_s^{2i}\frac{|H|^2}{|V|} & \geq \sum_{u \in U'} \deg_i^2(u, U') \geq \sum_{u \in Z_s'} \deg_i^2(u, W', U') \\
  & \geq \frac{1}{|Z_s'|} \left( \sum_{u \in Z_s'} \deg_i(u,W',U') \right)^2 \geq \frac{1}{|Z_s'|} \left( \frac{\xi' q_s^i |H|}{2}\right)^2
\end{align*}
and consequently,
\[
|Z_s'| \geq \frac{(\xi')^2}{4^{k+1}k^2K}|V|.
\]
Since $Z_s' \subseteq U' = U \setminus Z(s-1)$, the~sets $Z_s'$ and $Z(s-1)$ are disjoint. Therefore, \eqref{eq:Z-grows} holds with probability at least
\[
1 - \exp(-b'q_s|V|) - \exp(-\hat{b}q|V|),
\]
which, by~\eqref{eq:pnVHn}, is at least $1 - \exp(-2b^*q_s|V|)$. This concludes the~proof of Claim~\ref{claim:main} and consequently, the~proof of Lemma~\ref{lemma:main}.

\section{Proofs of the~new results}

\label{sec:proofs-new-results}

In this section, we prove Theorems~\ref{thm:stability-Gnp-new} and~\ref{thm:stability-groups-random}. the~derivation of Theorem~\ref{thm:F5F7-stability-Gnp} from Theorems~\ref{thm:F5F7-stability}, \ref{thm:hypergraph-removal-lemma}, and~\ref{thm:main} and the~calculations done in~\cite{Sc} (proving that the~appropriate sequence of hypergraphs is $(K,\bp)$-bounded) does not differ much from the~proof of Theorem~\ref{thm:stability-Gnp-new} given below and hence we shall leave it to the~reader.

\begin{proof}[{Proof of Theorem~\ref{thm:stability-Gnp-new}}]
  Let $H$ be a~graph with at least one vertex contained in at least two edges. We want to apply Theorem~\ref{thm:main}. To this end, consider the~sequence $\bH$ of $e(H)$-uniform hypergraphs with $V(H_n) = E(K_n)$ and $E(H_n)$ consisting of edge sets of all copies of $H$ in $K_n$. Moreover, let $p_n = n^{-1/m_2(H)}$ and let $\B_n$ be the~family of edge sets of all complete $(\chi(H)-1)$-partite graphs on the~vertex set $[n]$. Observe that in order to complete the~proof, it suffices to verify that the~assumptions of Theorem~\ref{thm:main} are satisfied. Since $H$ contains a~vertex with degree at least $2$, we have that $m_2(H) \geq 1$ and hence
  \[
  p_n^{e(H)} |H_n| \geq p_n^{e(H)} \binom{n}{v(H)} = \Omega(p_n^{e(H)} n^{v(H)}) = \Omega(p_n n^2) = \Omega(n).
  \]
  Moreover, it was proved in~\cite{Sc} that the~sequence $\bH$ is $(K,\bp)$ bounded for some sufficiently large constant $K$. Finally, note that if $\chi(H) > 2$, then $H$ contains an~odd cycle (of length at most $v(H)$) and hence $m_2(H) \geq 1 + 1/(v(H)-2) > 1$. It follows that regardless of $\chi(H)$,
  \[
  |B_n| = (\chi(H) - 1)^n = \exp(o(p_n n^2)) = \exp(o(p_n|V(H_n)|)).
  \]

  Crucially, we need to verify that the~sequence $\bH$ is $\left( 1 - \frac{1}{\chi(H)-1}, \bB \right)$-stable. For that, we appeal to the~original stability theorem of Erd{\H o}s and Simonovits (Theorem~\ref{thm:stability}) and to the~graph removal lemma (Theorem~\ref{thm:graph-removal-lemma}). Fix a~positive $\delta$, let $\delta'' = \delta/5$, $\eps' = \eps_{\ref{thm:stability}}(\delta'')$, $\delta' = \min\{\delta'', \eps'/2\}$, and let $\eps = \min\{\eps'/2, \eps_{\ref{thm:graph-removal-lemma}}(\delta')\}$. Let $G$ be a~subgraph of $K_n$ with at least $\left(1 - \frac{1}{\chi(H)-1} - \eps\right)\binom{n}{2}$ edges that cannot be made $(\chi(H)-1)$-partite by removing from it $\delta\binom{n}{2}$ edges. We claim that it contains at least $\eps n^{v(H)}$ copies of $H$. If it did not, then by Theorem~\ref{thm:graph-removal-lemma}, removing at most $\delta' n^2$ edges from $G$ would make it into an~$H$-free graph $G'$. Since such $G'$ would still have at least $\ex(n,H) - (\eps + \delta')n^2$ edges, by Theorem~\ref{thm:stability} it could be made $(\chi(H)-1)$-partite by removing from it some further $\delta'' n^2$ edges. Hence, $G$ could be made bipartite by removing at most $2\delta''n^2$ edges, which is fewer than $\delta\binom{n}{2}$ edges, contradicting our assumption.
\end{proof}

\begin{proof}[{Proof of Theorem~\ref{thm:stability-groups-random}}]
  Let $q$ be a~prime with $q \equiv 2 \pmod 3$ and let $\bG$ be a~sequence of type $I(q)$ groups satisfying $|G_n| = n$. We want to apply Theorem~\ref{thm:main}. To this end, consider the~sequence $\bH$ of $3$-uniform hypergraphs with $V(H_n) = G_n$ and $E(H_n)$ consisting of all triples $\{x, y, z\}$ satisfying $x + y = z$ and note that $|H_n| = \Omega(n^2)$. Moreover, let $p_n = n^{-1/2}$ and let $\B_n$ be the~family of all maximum-size sum-free subsets of $G_n$. In order to complete the~proof, it suffices to show that the~assumptions of Theorem~\ref{thm:main} are satisfied. First, note that $p_n^3 |H_n| = \Omega(n^{1/2})$. Since for each Schur triple $\{x, y, z\} \in H_n$, there are only constantly many Schur triples $\{x',y',z'\} \in H_n$ intersecting $\{x, y, z\}$ in more than one element, an~easy computation (see~\cite{Sc}) shows that $\bH$ is $(K,\bp)$-bounded for sufficiently large constant $K$. Finally, since $|\B_n| \leq n$ (see, e.g.,~\cite[Corollary~3.4]{BaMoSa}), we have that $|\B_n| = \exp(o(n^{1/2})) = \exp(o(p_n|V(H_n)|)$. Crucially, we need to verify that $\bH$ is $\left(\frac{1}{3} + \frac{1}{3q}, \bB \right)$-stable. For that, we simply appeal to Corollary~\ref{cor:group-stability}.
\end{proof}

\bibliographystyle{amsplain}
\bibliography{stability}

\end{document}